\theoremstyle{plain} \newtheorem{Th}{Theorem}[section]
\theoremstyle{definition}
\newtheorem{lemma}[Th]{Lemma}
\newtheorem{Def}[Th]{Definition}
\newtheorem{remark}[Th]{Remark}
\newtheorem{Cor}[Th]{Corollary}
\begin{document}

\sloppy
\binoppenalty=10000
\relpenalty=10000

\title{On the two symmetries in the theory of $m$-Hessian operators\footnote{The paper was supported by the RFBR grants 15-01-07650, 15-31-20600.}}

\author{Ivochkina N. M.\footnote{St.-Petersburg State University,
Russia, 199034, St.Petersburg, Universitetskaya nab., 7-9. E-mail: ninaiv@NI1570.spb.edu.}, Filimonenkova N. V.\footnote{Peter the Great Saint-Petersburg Polytechnic University, Russia, 195251, St.Petersburg, Polytechnicheskaya, 29. E-mail: nf33@yandex.ru.}}

\date{Dedicated to the memory of Marek Burnat}

\maketitle

\begin{abstract}
We show that the modern theory of fully nonlinear operators had been started by the skew symmetry of minors in cooperation with the symmetry of symmetric functions. The paper presents  some consequences of this interaction for the $m$-Hessian operators. One of them is a setting of the isoperimetric variational problem for Hessian integral. The $m$-admissible minimizer is found, what brings out a new simple proof of the well known Poincare - type inequalities for Hessian integrals. Also a new set of inequalities, generated by a special finite set of functions, is found.
\end{abstract}

\section{Introduction}

The modern theory of fully nonlinear second order partial differential equations counts more than 35 years and  has been started by the papers
\cite{Ev82}, \cite{Kr83},
where the a priori estimates of Holder constants for the second derivatives of solutions have been constructed. It reduced the problem of the classic solvability of the Dirichlet problem for fully nonlinear second-order partial differential equations to construction of the a priori estimate of solutions in $C^2$. An attempt to give general description of admitting this estimate fully nonlinear operators may be found in
\cite{CNS85}, \cite{CNS88}, \cite{Kr83}.

There are other trends in this theory. One of them is to extend some known in the theory of the linear elliptic operators qualitative results to fully nonlinear operators. The first examples of such pattern are the embedding-type theorems for introduced in the papers
\cite{Ts}, \cite{W94}, \cite{TW98}
Hessian integrals.
Discussion on some other inherited from the linear case problems may be found, for instance, in the recent papers
\cite{V}, \cite{DPC}
and many others.

On the other hand, there are developments, which have no analogs in the linear theory, and these are of interest in our paper. It singles out the fully nonlinear operators of very special structure.  A classic representative of this kind is the Monge -- Ampere operator
$$\det u_{xx},\quad u\in C^2(\Omega),\quad\Omega\subset\mathbb R^n,$$
where $u_{xx}$ is the Hesse matrix of $u$.
Up to 1970 investigation of the Monge -- Ampere equation had been performed in the frames of differential geometry (see
\cite{P75}
and references therein). From 1975 the the Dirichlet problem for Monge -- Ampere equations became a model to modify methods developed in the theory of linear second-order partial differential equations to the fully nonlinear equations. In particular, it became the basis for the study of $m$-Hessian operators:
\begin{equation}T_m[u]=T_m(u_{xx}),\quad 0\leqslant m\leqslant n.\label{mh}\end{equation}
Here $T_0(S)\equiv1$, $T_m(S)$ is the $m$-trace of symmetric matrix $S$, that is the sum of all $m$-order principal minors. The set of operators (\ref{mh}) includes Laplace and Monge -- Ampere operators, $m=1$, $m=n$ respectively.

The $m$-Hessian operator is $m$-homogeneous and has two kinds of symmetries. The first is the orthogonal invariance of $m$-traces. Namely, if  $B$ is $n\times n$ an orthogonal matrix, then
\begin{equation}T_m(S)=T_m(BSB^T),\quad BB^T=Id.\label{ort}\end{equation}
Such symmetry admits a substitute of the $m$-traces of symmetric matrix by the elementary symmetric functions of the order $m$ of its eigenvalues $\lambda(S)$:
$$T_m(S)=S_m(\lambda(S))=\sum_{i_1<i_2<\ldots<i_m}\lambda_{i_1}\lambda_{i_2}\ldots\lambda_{i_m}.$$
It follows from the papers
\cite{CNS85}, \cite{T95}
that such symmetry suffices a classic solvability of the Dirichlet problem for $m$-Hessian equations. May be this is the reason that up to now the majority of scholars prefer to write $m$-Hessian operators (\ref{mh}) in terms of the eigenvalues of the Hesse matrix $D^2u=u_{xx}$:
\begin{equation} T_m[u]=S_m(\lambda[D^2u]).\label{sim}\end{equation}

The orthogonal invariance is well known type of symmetry of $m$-Hessian operators but in this paper we focus on the second type of symmetry, which we call a skew symmetry. In mid 70-$th$ this symmetry unnamed was discovered and investigated in quite different areas of mathematics. It brought out new nonlinear differential operators and mathematical models.

In Section 2 of this paper we give a brief of this story and show that the skew symmetric operators are divergence free, if homogeneous, generate exterior $n$-forms, etc. In fact, all this is the straightforward consequence of the skew symmetry of minors and that is why we say about skew symmetric functions and operators. In this paper we expose some well known for the set of $m$-Hessian operators relations as the consequences of this type of symmetry.

The approach of Section 2 makes reasonable to interpret Hessian integrals
$$I_m[u]:=\int_\Omega-uT_m[u]dx,\quad m=1,2,\ldots,n,$$
as a collection of new type of volumes  related to a bounded domain $\Omega\subset\mathbb R^n$ and functional sets
$$\left\{u\in C^2(\Omega): \;T_m[u]>0\right\}.$$
In order to compare these volumes we set up and solve a variational isoperimetric problem in Section 3. Somewhat unexpectedly this setting has carried out the Poincare type inequalities. These inequalities were first discovered by N. S. Trudinger and Xu-Jia Wang in
\cite{TW98}. They derived these inequalities by a different method but the most essential link is the same. Namely, it is the nontrivial solvability of the Dirichlet problem
\begin{equation}\label{w1} T_m[w]-T_l[w]=0,\quad w\arrowvert_{\partial\Omega}=0,\quad 0\leqslant l<m\leqslant n.\end{equation}
Equation (\ref{w1}) may be rewritten as $T_{m,l}[u]=1$ and in this form qualified as the simplest equation with  Hessian quotient operator
\begin{equation}T_{m,l}:=\frac{T_m[u]}{T_l[u]},\quad 1\leqslant l<m\leqslant n,\label{qua}\end{equation}
introduced in the papers of N.S.Trudinger
\cite{T90}, \cite{T95}.
Notice that a quotient operator $T_{m,l}[u]$ is not skew symmetric.
The sufficient close to necessary conditions for classic solvability of the Dirichlet problem to equation $T_{m,l}[u]=f>0$ had been found in the paper
\cite{T95}.
The following theorem is a particular case of Theorem 1.1 from this paper.
\begin{Th}
Let $\Omega\subset\mathbb R^n$ be a bounded domain, $\partial\Omega\in C^{4+\alpha}$. Assume that $\partial\Omega$ is $(m-1)$-convex. Then the problem (\ref{w1}) has the unique in $C^2(\Omega)$ nontrivial solution $w\in C^{4+\alpha}(\bar\Omega)$ for the odd $q=ml$ and two solution, $w$, $-w$, otherwise.
\end{Th}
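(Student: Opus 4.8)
\emph{Sketch of the proof.} The plan is to solve (\ref{w1}) in the class of $m$-admissible functions, where the equation is elliptic, by the method of continuity based on a priori estimates, to obtain the stated regularity from the Evans--Krylov theorem and a Schauder bootstrap, and then to settle uniqueness by the comparison principle together with the reflection $w\mapsto-w$.

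First I would recast the equation. For $l\geqslant1$ and an $m$-admissible $w$ the equality $T_m[w]-T_l[w]=0$ is the same as $T_{m,l}[w]=1$, and, setting $F[w]:=\big(T_{m,l}[w]\big)^{1/(m-l)}$, one has (see \cite{T95}) that $F$ is elliptic, positively $1$-homogeneous and concave on the $m$-admissible cone and vanishes on its boundary; for $l=0$ the equation is simply $T_m[w]=1$. An $m$-admissible function with zero boundary values is subharmonic, so every such solution satisfies $w<0$ in $\Omega$.

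The core is a collection of a priori estimates for an $m$-admissible solution $w$. The bound $\|w\|_{C^0(\bar\Omega)}\leqslant C$ follows by comparison with a quadratic barrier, using boundedness of $\Omega$, subharmonicity (from above) and $T_m[w]\geqslant1$ (a bound for $-w$ from below). The gradient bound $\|w\|_{C^1(\bar\Omega)}\leqslant C$ comes from interior estimates and from boundary barriers built on supporting paraboloids, which already use the $(m-1)$-convexity of $\partial\Omega$. The decisive step is $\|w\|_{C^2(\bar\Omega)}\leqslant C$: the interior part follows by differentiating the equation twice and invoking concavity of $F$ with the controlled lower-order norms; at the boundary the tangential-tangential derivatives are fixed by $w|_{\partial\Omega}=0$, the tangential-normal ones by an auxiliary barrier, and the double normal derivative $w_{\nu\nu}$ is controlled by using that $(m-1)$-convexity of $\partial\Omega$ makes the boundary curvature enter with the favourable sign. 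I expect this last, double-normal, boundary estimate to be the main obstacle of the whole proof. Once $\|w\|_{C^2(\bar\Omega)}\leqslant C$, the eigenvalues $\lambda[D^2w]$ stay in a compact subset of the $m$-admissible cone, so the equation is uniformly elliptic along solutions; the Evans--Krylov theorem then yields a $C^{2,\alpha}$ bound, and a Schauder bootstrap, using $\partial\Omega\in C^{4+\alpha}$, produces $w\in C^{4+\alpha}(\bar\Omega)$.

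Existence now follows by the method of continuity, deforming the uniformly elliptic concave problem $T_{m,l}[w]=1$ to $T_m[w]=1$ (classically solvable under $(m-1)$-convexity of $\partial\Omega$ by the results cited in \cite{CNS85}, \cite{T95}) while staying inside the concave elliptic class: the uniform $C^{2,\alpha}$ estimate makes the set of admissible deformation parameters closed, and invertibility of the linearization makes it open, hence it is all of $[0,1]$. Uniqueness is the comparison principle: two $m$-admissible solutions with the same boundary data differ by a function obeying a linear elliptic inequality with no zeroth-order term and zero boundary values, hence coincide; this pins down the solution $w<0$. Finally, $T_k[-v]=(-1)^kT_k[v]$ gives $T_m[-w]-T_l[-w]=(-1)^l\big((-1)^{m-l}T_m[w]-T_l[w]\big)$, which vanishes exactly when $m-l$ is even; in that case $-w$ (lying in the negative of the $m$-admissible cone) is a second nontrivial solution, distinct from $w$ since $w\not\equiv0$, whereas if $m-l$ is odd no reflected solution exists. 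Ruling out further $C^2$ solutions outside these two branches needs a separate argument, showing such a solution cannot change the sign-type of its Hessian given the zero boundary data, which is the delicate part of the full $C^2$-uniqueness claim.
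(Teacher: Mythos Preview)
Your sketch is sound and follows the standard route of \cite{T95}: recast as $T_{m,l}[w]=1$, establish the a priori $C^0$, $C^1$, $C^2$ estimates (with the $(m-1)$-convexity of $\partial\Omega$ entering in the boundary double-normal bound), apply Evans--Krylov and Schauder to reach $C^{4+\alpha}$, close by the method of continuity, and handle multiplicity via comparison and the reflection $w\mapsto-w$. However, the paper does not supply its own proof of this statement. Theorem~1.1 is presented explicitly as ``a particular case of Theorem~1.1 from'' \cite{T95}, and the later Theorem~4.4 simply records that its existence part ``is identical to Theorem~1.1''; Section~4 restates Trudinger's theorem (Theorem~4.1) and adds only the monotonicity/ellipticity argument (Theorem~4.3) and Lemma~4.2, not a new existence proof. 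So there is nothing to compare against beyond the citation: your outline \emph{is} the argument the paper is invoking by reference.

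Two minor remarks. First, your parity computation is the correct one: $-w$ solves the equation iff $m-l$ is even (equivalently $m+l$ even), which the paper itself acknowledges in Section~4 (``speciality of even values of the number $m+l$''); the phrase ``odd $q=ml$'' in the statement is a slip you need not try to reproduce. Second, your closing caveat is well placed: the comparison principle pins down uniqueness only within the $m$-admissible branch (resp.\ its negative), and excluding further $C^2$ solutions requires the observation (Lemma~4.2 in the paper) that any $u\in{\stackrel{\circ}{C}}\vphantom{C}^2_-(\bar\Omega)$ with $T_{m,l}[u]>0$ already lies in ${\stackrel{\circ}{\mathbb K}}\vphantom{\mathbb K}_m(\bar\Omega)$, which reduces the global $C^2$-uniqueness to the admissible case.
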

A notion of the $p$-convexity of the hypersurface  via its $p$-curvature $\mathbf k_{p}[\partial\Omega]$ may be found in
\cite{Iv12}, \cite{IF14fix}
and an assumption from Theorem 1.1 is equivalent to the inequality $\mathbf k_{m-1}[\partial\Omega]>0$, $\mathbf k_{m-1}[\partial\Omega]$ is the $(m-1)$-curvature of $\partial\Omega$.

A brief of the theory of Hessian quotients $T_{m,l}$ is given in Section 4 of this paper.

In Section 5 we consider the direct approach to deduction of the Poincare type inequalities, what is to find an $m$-admissible minimizer to the functional
\begin{equation}\label{vw}J_{m,l}[u]:=\frac{I_m^{\frac{1}{m+1}}[u]}{I_l^{\frac{1}{l+1}}[u]},\quad u\arrowvert_{\partial\Omega}=0,\quad0\leqslant l<m\leqslant n.\end{equation}
The answer is known, Section 3. Namely, the unique nontrivial solution of the problem (\ref{w1}) with $w=w_{m,l}\leqslant 0$ provides minimum to the functional (\ref{vw}) on the set of $m$-admissible functions. Hence, $\delta^2J_{m,l}[w_{m,l}]\geqslant 0$ onto this set. The latter carries out a collection of regulated by functions $\{w_{m,l}\}$ new inequalities. The following proposition presents a sample of those.
\begin{Th}
Let $\partial\Omega\in C^{4+\alpha}$, $u\in{\stackrel{\circ}{W}}\vphantom {W}_1^2(\Omega)$. Assume that Gauss curvature of $\partial\Omega$ is positive. Then
\begin{equation}\label{P2}\frac{n-1}{\int_\Omega|w_x|^2dx}\left(\int_\Omega u\Delta wdx\right)^2+\int_\Omega|u_x|^2dx\leqslant\int_\Omega u_iu_j\frac{\partial}{w_{ij}}(\det w_{xx})dx,\end{equation}
where $w\leqslant0$ is the nontrivial solution to the problem (\ref{w1}) with $l=1$, $m=n$.
\end{Th}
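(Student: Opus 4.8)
The plan is to recognise (\ref{P2}) as the inequality $\delta^{2}J_{n,1}[w_{n,1}]\geqslant 0$ announced above, written out for $l=1$, $m=n$.

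\emph{Step 1 (the minimiser).} By Theorem 1.1 with $l=1$, $m=n$ (positivity of the Gauss curvature of $\partial\Omega$ is exactly the required $(m-1)$-convexity) together with Section~3, there is a nontrivial $n$-admissible $w=w_{n,1}\in C^{4+\alpha}(\bar\Omega)$, $w\leqslant 0$, solving $\det w_{xx}=\Delta w$ in $\Omega$, $w|_{\partial\Omega}=0$, and $w$ minimises $J_{n,1}$ from (\ref{vw}) over the $n$-admissible functions vanishing on $\partial\Omega$. Two consequences are recorded for later use: $\det w_{xx}=\Delta w$, and hence $I_n[w]=\int_\Omega(-w)\det w_{xx}\,dx=\int_\Omega(-w)\Delta w\,dx=I_1[w]=\int_\Omega|w_x|^2\,dx=:\gamma>0$.

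\emph{Step 2 (reduction to the second variation).} Fix first $u\in C_0^\infty(\Omega)$; since $w_{xx}$ is uniformly positive on $\mathrm{supp}\,u$, the function $w+tu$ is $n$-admissible for small $|t|$, so $g(t):=J_{n,1}[w+tu]$ is smooth near $t=0$ with $g(t)\geqslant g(0)$. Put $a(t)=I_n[w+tu]$, $b(t)=I_1[w+tu]=\int_\Omega|w_x+tu_x|^2\,dx$, so $g=a^{1/(n+1)}b^{-1/2}$ and $a(0)=b(0)=\gamma$. From $g'/g=\frac{1}{n+1}a'/a-\frac12 b'/b$ the stationarity $g'(0)=0$ reads $\int_\Omega u(\det w_{xx}-\Delta w)\,dx=0$, which holds, and gives $\beta:=a'(0)/(n+1)=b'(0)/2$. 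A routine computation with logarithmic derivatives (using $a'(0)=(n+1)\beta$, $b'(0)=2\beta$, $a(0)=b(0)=\gamma$) then turns $g''(0)\geqslant 0$ into
\[
\frac{a''(0)}{n+1}-\frac{b''(0)}{2}\ \geqslant\ \frac{n-1}{\gamma}\,\beta^{2}.
\]

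\emph{Step 3 (evaluating the terms and closing).} From $b$ one gets $b'(0)=2\int_\Omega w_iu_i\,dx=-2\int_\Omega u\,\Delta w\,dx$, hence $\beta=-\int_\Omega u\,\Delta w\,dx$ and $b''(0)/2=\int_\Omega|u_x|^2\,dx$. For $a$, differentiate $a(t)=-\int_\Omega(w+tu)\det(w_{xx}+tu_{xx})\,dx$ and integrate by parts twice, using (i) that the cofactor matrix $\mathrm{cof}(v_{xx})$ of a Hesse matrix is divergence free (cf. Section~2), (ii) that $w+tu$ and $u$ vanish on $\partial\Omega$, and (iii) Euler's identity $(v_{xx})_{ij}\,\partial(\det v_{xx})/\partial v_{ij}=n\det v_{xx}$: the term $-\int u\det(\cdot)$ and the term $-\int(w+tu)\,\mathrm{cof}_{ij}(\cdot)\,u_{ij}$ merge into $a'(t)=-(n+1)\int_\Omega u\,\det(w_{xx}+tu_{xx})\,dx$. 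Differentiating once more and integrating by parts again,
\[
a''(0)=-(n+1)\int_\Omega u\,u_{ij}\,\frac{\partial\det w_{xx}}{\partial w_{ij}}\,dx=(n+1)\int_\Omega u_iu_j\,\frac{\partial\det w_{xx}}{\partial w_{ij}}\,dx .
\]
Substituting $a''(0)/(n+1)$, $b''(0)/2$ and $\beta^{2}=(\int_\Omega u\,\Delta w\,dx)^{2}$ into the displayed inequality of Step~2 gives precisely (\ref{P2}) for $u\in C_0^\infty(\Omega)$. Since $w\in C^{4+\alpha}(\bar\Omega)$, each of the three terms of (\ref{P2}) is a continuous quadratic functional on $\stackrel{\circ}{W}\vphantom{W}_1^2(\Omega)$, and $C_0^\infty(\Omega)$ is dense there, so (\ref{P2}) extends to all $u\in\stackrel{\circ}{W}\vphantom{W}_1^2(\Omega)$.

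\emph{Main obstacle.} The only nonroutine computation is that of $a'(t)$ and $a''(0)$: the first pair of integrations by parts must fuse the ``$\det$'' and ``cofactor'' contributions — via divergence-freeness of cofactors and Euler's identity — so as to produce the exact factor $n+1$, after which one has to verify that $g''(0)\geqslant 0$ delivers the constant $n-1$ and the weight $\gamma^{-1}$ exactly as written in (\ref{P2}). The admissibility of the perturbations, the only other delicate point, is dispatched by treating $u\in C_0^\infty(\Omega)$ first and passing to $\stackrel{\circ}{W}\vphantom{W}_1^2(\Omega)$ by density at the very end.
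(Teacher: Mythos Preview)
Your proof is correct and follows the same route as the paper: both arguments identify $w=w_{n,1}$ as the minimiser of $J_{n,1}$ and read off (\ref{P2}) from $\delta^{2}J_{n,1}[w]\geqslant 0$, computing the variations via the divergence-free identity (\ref{Id}) (your cofactor/Euler manipulation is exactly the $m=n$ case of the paper's formulas (\ref{ani}), (\ref{2ani})). The only cosmetic difference is that the paper first establishes the general second-variation inequality (\ref{anpo}) for arbitrary $0\leqslant l<m\leqslant n$ (Theorem~5.2) and then specialises to $l=1$, $m=n$, whereas you specialise at the outset and carry out the log-derivative bookkeeping explicitly; the admissibility of the perturbation and the density passage to ${\stackrel{\circ}{W}}\vphantom{W}^2_1(\Omega)$ are handled the same way.
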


\section{On the skew symmetry of fully nonlinear differential operators}
In order to indicate the idea of started in mid-seventies formalism (see, for instance,
\cite{Re73},\cite{Ru74}, \cite{I75}, \cite{B77}), \cite{Ly}, \cite{BT78}),
we present a slightly updated version of Theorem 2.1 from
\cite{I80}.
\begin{Th}
Let $\Omega\subset\mathbb R^n$ be a bounded domain, $v=(v^1,\dots,v^n)^T\in C^1(\bar\Omega)$:
$$v_i:=\frac{\partial v}{\partial x^i},\quad v_x:=(v^k_i)_1^n.$$
The following statements are equivalent:
\vskip .1in
($i$) Lagrangian $F[v]=F(v_x)$ belongs to the kernel of variational derivative, i.e.,  $\int_{\Omega}F(v_x)dx$ does not depend on $v(x)$, $x\in\Omega$;
\vskip .1in
($ii$) the identities
\begin{equation}\label{ann}\frac{\partial}{\partial x^i}\frac{\partial F[v]}{\partial v^k_i }\equiv0,\quad k=1,\dots,n\end{equation}
are valid;
\vskip .1in
($iii$) an operator $F[v]=F(v_x)$ is a linear combination of an arbitrary order minors of $\det v_x$.
\end{Th}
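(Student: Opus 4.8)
\emph{Proof plan.} The natural route is the cycle $(i)\Rightarrow(ii)\Rightarrow(iii)\Rightarrow(i)$, reading $(i)$ as the assertion that $\int_\Omega F(v_x)\,dx$ is determined by $v|_{\partial\Omega}$, and tacitly assuming $F\in C^2$ and, wherever second $x$-derivatives occur, $v\in C^2(\bar\Omega)$. The equivalence $(i)\Leftrightarrow(ii)$ is the classical fact that a Lagrangian is null precisely when its Euler operator vanishes: for $\eta\in C_0^\infty(\Omega)$ the first variation along $v+t\eta$ equals $\int_\Omega\sum_{k,i}\frac{\partial F}{\partial v^k_i}\eta^k_i\,dx$, and if $(i)$ holds this is zero for every such $\eta$, so integration by parts together with the fundamental lemma of the calculus of variations gives $(ii)$; conversely, for two competitors agreeing on $\partial\Omega$ one differentiates the integral along the straight-line homotopy between them, integrates by parts, and invokes $(ii)$ and the vanishing boundary term to see that the integral is constant, i.e. $(i)$.

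The core is $(ii)\Rightarrow(iii)$. Treating the entries of $v_x$ as independent variables $p^k_i$ and expanding $(ii)$ by the chain rule turns it into the polynomial identity $\sum_{i,l,j}\frac{\partial^2F}{\partial p^k_i\partial p^l_j}\,v^l_{ij}\equiv0$ in the free symmetric arrays $v^l_{ij}=v^l_{ji}$; since at any interior point an arbitrary $1$-jet and symmetric $2$-jet is realized by a quadratic $v$, $(ii)$ is equivalent to the algebraic relations $\frac{\partial^2F}{\partial p^k_i\partial p^l_j}=-\frac{\partial^2F}{\partial p^k_j\partial p^l_i}$ for all $k,l,i,j$. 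Specializing $i=j$ (resp.\ $k=l$) annihilates every second derivative of $F$ that uses two variables from one column (resp.\ one row); hence $F$ is affine in each variable separately, so $F=\sum_T c_T\prod_{(k,i)\in T}p^k_i$ over sets $T$ of index pairs, and no $T$ with $c_T\neq0$ repeats a row or a column — each surviving $T$ is the graph of a bijection $\sigma\colon K\to I$ between equal-size row- and column-sets. Finally, applying the antisymmetry relation with $k\neq l$, $i\neq j$ inside a fixed block $(K,I)$ matches the coefficient of $\prod_k p^k_{\sigma(k)}$ with minus that of $\prod_k p^k_{\sigma\circ(k\,l)(k)}$; since transpositions generate the symmetric group on $K$ this forces $c_\sigma=\mathrm{sgn}(\sigma)\,c_{K,I}$, so the $(K,I)$-component of $F$ is $c_{K,I}\det\!\big((p^k_i)_{k\in K,\,i\in I}\big)$. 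Summing over all $K,I$ — the empty block contributing the constant, a $0$-th order minor — yields $(iii)$.

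To close the cycle it remains to check $(iii)\Rightarrow(ii)$, and by linearity only for a single $p$-minor $M$ of $v_x$. Since $\partial M/\partial v^k_i$ is, up to sign, the complementary $(p-1)$-order minor (with row $k$ and column $i$ struck out), or $0$, the expression $\sum_i\partial_{x^i}\big(\partial M/\partial v^k_i\big)$ is an alternating sum of products of a second derivative $v^l_{ij}$ with a $(p-2)$-order minor in which rows $k,l$ and columns $i,j$ are deleted; each such product occurs twice, with a relative minus sign furnished by the skew symmetry of the complementary minors under interchange of the two removed columns, while $v^l_{ij}=v^l_{ji}$ makes the two copies equal, so everything cancels — this is the Piola identity for subdeterminants. (Equivalently: multilinearity and alternation of $\det$ make $D^2M$ satisfy exactly the antisymmetry relations already seen to be equivalent to $(ii)$.)

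The one genuinely delicate point is the middle implication: reducing $(ii)$ to the Hessian antisymmetry and then to the multi-affine normal form is routine, but extracting the alternating coefficient pattern $c_\sigma=\mathrm{sgn}(\sigma)c_{K,I}$ — and, symmetrically, keeping track of the signs in the Piola cancellation for $(iii)\Rightarrow(ii)$ — requires careful combinatorial bookkeeping, arranged so that the degenerate low-order minors (the constant and the linear monomials $p^k_i$) are covered without a separate argument.
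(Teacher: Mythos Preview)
The paper does not actually prove this theorem: it is quoted as ``a slightly updated version of Theorem 2.1 from \cite{I80}'' and followed only by the one-line remark that ``the skew symmetry of minors is of common knowledge and \dots\ $(i)$, $(ii)$ are the consequences of this property via $(iii)$.'' So there is no proof in the paper to compare against; what you have written is a genuine proof where the paper offers only a citation.

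Your argument is correct. The cycle $(i)\Leftrightarrow(ii)$ is the standard null-Lagrangian computation; the implication $(iii)\Rightarrow(ii)$ is exactly the Piola/Jacobi identity the paper alludes to with its phrase ``skew symmetry of minors''; and your $(ii)\Rightarrow(iii)$ --- extracting the Hessian antisymmetry
\[
\frac{\partial^2F}{\partial p^k_i\,\partial p^l_j}+\frac{\partial^2F}{\partial p^k_j\,\partial p^l_i}=0
\]
from the arbitrariness of the symmetric $2$-jet, deducing multi-affinity and the row/column exclusion from the diagonal cases, and then reading off the alternating sign pattern on each $(K,I)$-block --- is the substantive step and is carried out cleanly. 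The only tacit hypotheses you flag ($F\in C^2$, and $v\in C^2$ where second $x$-derivatives appear) are indeed needed and are implicit in the paper's setting as well. In short: your proposal supplies what the paper omits, and the direction the paper does comment on ($(iii)\Rightarrow(i),(ii)$) matches your $(iii)\Rightarrow(ii)\Rightarrow(i)$ leg.
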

The skew symmetry of minors is of common knowledge and it has turned out that $(i)$, $(ii)$ are the consequences of this property via $(iii)$.

\begin{Def}
We say an operator $F[v]=F(v_x)$, $v=(v^1,\dots,v^n)^T\in C^1(\bar\Omega)$ is skew symmetric if it is a linear combination of an arbitrary order minors of $\det v_x$.
\end{Def}

Notice that it is rather senseless to speak about the skew symmetry, when only minors of the first order are taken in $(iii)$. In this case Theorem 2.1 is a triviality. Nevertheless, the divergence free linear differential operators might be qualified as generated by skew symmetric ones.

This amazing property had been a starting point to some important developments in quite different areas of mathematics and not surprisingly the choice of $v$ as well as notations were different therein. For instance, the authors of
\cite{Ru74}, \cite{B77}
had worked with vector-fields $v\in \mathbb R^n$. In the paper
\cite{Re73}
the vector-functions $v=u_x/\sqrt{1+u_x^2}$, $u\in C^2$, are under consideration  and geometric curvature operators have been investigated from this point of view.

In presented paper the case $v=u_x$ , i.e, Hessian operators, generated by Hesse matrix $u_{xx}$, is of main interest. The following proposition is well known since long and in order to underline its connection with the skew symmetry, we formulate it in our terminology.
\begin{Cor}
Let $v=u_x$, $u\in C^2$. Assume that the operator $F[u]=F(u_{xx})$ is $m$-homogeneous and skew symmetric. Then
\begin{equation}\label{Id}F[u]\equiv \frac{1}{m}\frac{\partial}{\partial x^i}\left(u_j\frac{\partial F[u]}{\partial u_{ij}}\right)\equiv \frac{1}{m}\frac{\partial^2}{\partial x^i\partial x^j}\left(u\frac{\partial F[u]}{\partial u_{ij}}\right).\end{equation}
\end{Cor}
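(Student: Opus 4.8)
The plan is to derive both identities in (\ref{Id}) by feeding two ingredients into an elementary product-rule computation: Euler's identity for $m$-homogeneous functions, and the annihilation identities (\ref{ann}) of Theorem 2.1. Throughout, abbreviate $a^{ij}:=\dfrac{\partial F[u]}{\partial u_{ij}}$, with summation over repeated indices understood. Since $F$ is $m$-homogeneous in the entries of the Hesse matrix, differentiating $F(tu_{xx})=t^mF(u_{xx})$ in $t$ at $t=1$ gives Euler's relation $u_{ij}\,a^{ij}\equiv mF[u]$. Since $F$ is skew symmetric, by Definition 2.2 it is a linear combination of minors of $\det v_x$ with $v=u_x$; hence the implication $(iii)\Rightarrow(ii)$ of Theorem 2.1 applies and yields (\ref{ann}). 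The one point that deserves care — and the place where I would be most careful about conventions — is the translation of (\ref{ann}) from the generic matrix $v_x$ to the symmetric Hesse matrix: for $v=u_x$ one has $v^k_i=u_{ki}=u_{ik}$, so $v_x=u_{xx}$ is symmetric and $\partial F/\partial v^k_i=a^{ki}=a^{ik}$; consequently (\ref{ann}) reads $\dfrac{\partial}{\partial x^i}a^{ij}\equiv0$ for each $j$, and, relabelling indices and using $a^{ij}=a^{ji}$, also $\dfrac{\partial}{\partial x^j}a^{ij}\equiv0$ for each $i$. These are the only two facts the proof needs.

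The first identity in (\ref{Id}) then follows from the product rule:
$$\frac{\partial}{\partial x^i}\left(u_j\,a^{ij}\right)=u_{ij}\,a^{ij}+u_j\,\frac{\partial}{\partial x^i}a^{ij}=mF[u]+0,$$
where Euler's relation handles the first summand and $\dfrac{\partial}{\partial x^i}a^{ij}\equiv0$ (for each $j$) kills the second. Dividing by $m$ gives $F[u]\equiv\dfrac1m\dfrac{\partial}{\partial x^i}\!\left(u_j\,a^{ij}\right)$.

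For the second identity I would expand the double divergence, again by the product rule:
$$\frac{\partial^2}{\partial x^i\partial x^j}\left(u\,a^{ij}\right)=\frac{\partial}{\partial x^i}\left(u_j\,a^{ij}+u\,\frac{\partial}{\partial x^j}a^{ij}\right)=\frac{\partial}{\partial x^i}\left(u_j\,a^{ij}\right),$$
since $\dfrac{\partial}{\partial x^j}a^{ij}\equiv0$ (for each $i$) makes the second summand vanish identically, hence contributes nothing after a further differentiation in $x^i$. The surviving term equals $mF[u]$ by the identity just established, so dividing by $m$ completes the proof. I expect no genuine obstacle beyond the index bookkeeping and the symmetric-matrix remark in the first paragraph; all of the substance is carried by Theorem 2.1 through its clause $(ii)$.
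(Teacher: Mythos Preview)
Your proof is correct and is precisely the argument the paper has in mind: the Corollary is stated without proof as ``well known since long,'' framed explicitly as a consequence of the skew symmetry via the annihilation identities~(\ref{ann}) of Theorem~2.1 together with $m$-homogeneity. Your derivation via Euler's relation and the product rule, with the careful translation of~(\ref{ann}) to the symmetric Hesse matrix, is exactly the standard computation the paper is alluding to.
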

The simplest example of $m$-homogeneous and skew symmetric operator is $m$-Hessian operator (\ref{mh}):
$$T_m[u]=T_m(u_{xx}).$$
Recall that by the symbol $T_m(u_{xx})$ we denote the $m$-trace of the matrix $u_{xx}$, that is the sum of all $m$-order principal minors, $T_0\equiv1$.

The skew symmetry of minors may be considered as an equivalent of the skew symmetry of exterior $n$-forms. Such approach to $m$-homogeneous fully nonlinear operators has been described, for instance, in the paper
\cite{I88}.
Namely, denote by $\omega_{m,n-m}[v]$ the exterior form
\begin{equation}\omega_{m,n-m}[v]=\sum_{(i_1<\dots <i_m)\atop(i_{m+1}<\dots<i_n)}\sigma(\mathbf i)dv^{i_1}\wedge\dots\wedge dv^{i_m}\wedge dx^{i_{m+1}}\wedge\dots\wedge dx^{i_n},\label{ef}\end{equation}
where $\sigma(\mathbf i)$  equals to $1$ either $-1$ depending on evenness of permutation $(i_1,\dots i_m, i_{m+1},\dots,i_n)$. Denote also $\omega_n(x)=\omega_{0,n}[v]$. The following proposition is the result of straightforward computing via (\ref{ef}), (\ref{ann}).
\begin{Th}
Let $v\in C^1$. Then
\begin{equation}\omega_{m,n-m}[v]=T_m(v_x)\omega_n(x),\quad0\leqslant m\leqslant n.\label{form}\end{equation}
\end{Th}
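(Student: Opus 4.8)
The plan is to expand the exterior product in (\ref{ef}) by writing each $dv^{i_k} = v^{i_k}_j\, dx^j$ and collecting terms. Since $dx^{i_{m+1}}\wedge\dots\wedge dx^{i_n}$ is already fixed on a complementary increasing block, the wedge $dv^{i_1}\wedge\dots\wedge dv^{i_m}\wedge dx^{i_{m+1}}\wedge\dots\wedge dx^{i_n}$ is nonzero only when the differentials $dv^{i_1},\dots,dv^{i_m}$ contribute exactly the indices $j_1,\dots,j_m$ missing from $\{i_{m+1},\dots,i_n\}$, i.e. exactly $\{i_1,\dots,i_m\}$ as a set. So for a fixed increasing multi-index $\mathbf i=(i_1<\dots<i_m)$ with complement $\mathbf i'=(i_{m+1}<\dots<i_n)$, the coefficient of $dx^{i_1}\wedge\dots\wedge dx^{i_m}\wedge dx^{i_{m+1}}\wedge\dots\wedge dx^{i_n}$ coming from this term of the sum is $\sigma(\mathbf i)$ times $\sum_{\pi}\operatorname{sgn}(\pi)\,v^{i_1}_{i_{\pi(1)}}\cdots v^{i_m}_{i_{\pi(m)}}$, the sum being over permutations $\pi$ of $\{1,\dots,m\}$; that inner sum is precisely the $m\times m$ minor of $v_x$ on rows and columns indexed by $\{i_1,\dots,i_m\}$, which is a principal $m$-minor. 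Meanwhile $dx^{i_1}\wedge\dots\wedge dx^{i_n} = \sigma(\mathbf i)\,\omega_n(x)$ by definition of $\sigma$, so the two factors of $\sigma(\mathbf i)$ cancel and this term contributes (principal $m$-minor on $\{i_1,\dots,i_m\}$) $\cdot\,\omega_n(x)$.

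Next I would sum over all increasing $\mathbf i$. Each subset $\{i_1,\dots,i_m\}\subset\{1,\dots,n\}$ of size $m$ arises from exactly one increasing multi-index, and contributes its principal minor. Hence $\omega_{m,n-m}[v]=\bigl(\sum_{|J|=m}\det\bigl((v_x)_{J\times J}\bigr)\bigr)\omega_n(x)=T_m(v_x)\,\omega_n(x)$, which is (\ref{form}). For $m=0$ the claim is the tautology $\omega_{0,n}[v]=\omega_n(x)=1\cdot\omega_n(x)$, and for $m=n$ it is the Leibniz expansion $dv^1\wedge\dots\wedge dv^n=\det(v_x)\,dx^1\wedge\dots\wedge dx^n$, consistent with $T_n=\det$.

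The only genuinely delicate point is the sign bookkeeping: one must verify that the sign $\sigma(\mathbf i)$ produced when reordering $dv^{i_1}\wedge\dots\wedge dv^{i_m}\wedge dx^{i_{m+1}}\wedge\dots\wedge dx^{i_n}$ into the standard order $dx^1\wedge\dots\wedge dx^n$ is exactly the $\sigma(\mathbf i)$ in the definition (\ref{ef}), so that the two cancel rather than compound. This is a direct consequence of the skew symmetry of minors recorded above — indeed it is the same combinatorial fact underlying Theorem 2.1(iii), that the shuffle sign attached to a subset and its complement is what makes the minor expansion consistent — and checking it amounts to tracking one permutation's parity; I would do this once and for all for a generic $\mathbf i$. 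Everything else is the routine multilinear-algebra expansion of a wedge product, invoking (\ref{ann}) only implicitly through the identification of the coefficient with a minor.
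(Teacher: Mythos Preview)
Your proposal is correct and is exactly the ``straightforward computing via (\ref{ef})'' that the paper invokes in lieu of a written-out proof; the paper gives no further details, so your expansion of $dv^{i_k}=v^{i_k}_j\,dx^j$, identification of the surviving coefficient with the principal $m$-minor on $\{i_1,\dots,i_m\}$, and cancellation of the shuffle sign $\sigma(\mathbf i)$ is precisely what is intended. One minor remark: the paper also cites (\ref{ann}) alongside (\ref{ef}), but as you rightly observe, the divergence-free identity plays no direct role in this wedge computation---its relevance is only through Theorem 2.1, which ties it to the skew symmetry of minors underlying the sign bookkeeping you flag.
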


It looks reasonable to interpret the $m$-homogeneous skew symmetric operators as operator-densities of some measures in $\Omega$, what carries out the restriction $T_m(v_x)>0$. Let, for instance, in (\ref{form})
$$v=u_x\quad\Rightarrow\quad\omega_{m,n-m}[v]=T_m(u_{xx})\omega_n(x),$$
\begin{equation}v=\frac{u_x}{\sqrt{1+u_x^2}}\quad\Rightarrow\quad\omega_{m,n-m}[v]=\mathbf k_m[\Gamma(u)]\omega_n(x),\label{ohc}\end{equation}
where $\mathbf k_m[\Gamma(u)]$ is $m$-curvature of the graph of $u$ (see \cite{Iv12}, \cite{IF14fix}).
So, if one plans to deal with geometric measures in the sense (\ref{ohc}), it is necessary to require $\mathbf k_m[\Gamma(u)]>0$.

If $T_m[u](x)>0$, the $m$-Hessian operator $T_m[u]=T_m(u_{xx})$ , $x\in\bar\Omega$, may be interpreted as $m$-Hessian operator-density of some measure in $\Omega$. Possibly, this was a reason to introduce a notion of ``Hessian measures'' in
\cite{TW97} under similar circumstances.

In order to describe some properties of $\omega_{m,n-m}[u_x]$, we fix orientation by the requirement $\int_{\Omega}\omega_n(x)>0$, $\Omega$ is a bounded domain in $\mathbb R^n$. This agreement and above argumentation single out a functional set $\{u\in C^2(\bar\Omega):\; T_m[u]>0\}$, $1\leqslant m\leqslant n$. The following theorem (see, for instance, \cite{IF14fix}) indicates some complications with these sets.
\begin{Th}
Let $\Omega$ be a bounded domain in $\mathbb R^n$, $\partial\Omega\in C^k, k\geqslant 2$. Assume there is a point $x_0\in\partial\Omega$ such that $\mathbf k_{m-1}[\partial\Omega](x_0)=0$.
Then
\begin{equation}\label{ems}\{u\in C^2(\bar\Omega):\;u\arrowvert_{\partial\Omega}={\rm const},\; T_m[u]>0\}=\emptyset,\end{equation}
whatever $1<m\leqslant n$ had been.
\end{Th}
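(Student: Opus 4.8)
The plan is to argue by contradiction: if the set in (\ref{ems}) contained a function $u$, then a short computation at the degenerate point $x_0$ would produce a forbidden value of an $(m-1)$-trace.

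Suppose, then, that $u$ lies in that set, i.e.\ $u\in C^2(\bar\Omega)$, $u$ is constant on $\partial\Omega$, and (recall that $T_m[u]>0$ means $u$ is $m$-admissible) $u_{xx}$ is $m$-admissible on $\bar\Omega$, $T_j[u]>0$ for $j=1,\dots,m$. First I would localize at $x_0$. Put the origin at $x_0$, take the inner unit normal to $\partial\Omega$ at $x_0$ to be $e_n$, and write $\partial\Omega$ near $x_0$ as the graph $x^n=\rho(x')$, $x'=(x^1,\dots,x^{n-1})$, with $\rho(0)=0$, $\rho_i(0)=0$, and, after an orthogonal change of the $x'$-variables, $\rho_{ij}(0)=\kappa_i\delta_{ij}$, the $\kappa_1,\dots,\kappa_{n-1}$ being the principal curvatures of $\partial\Omega$ at $x_0$. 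Replacing $u$ by $u$ minus its boundary constant (which leaves $u_{xx}$ unchanged) so that $u\arrowvert_{\partial\Omega}=0$, and differentiating the identity $u(x',\rho(x'))\equiv0$ twice at $x'=0$, I obtain $u_i(x_0)=0$ and $u_{ij}(x_0)=-u_n(x_0)\kappa_i\delta_{ij}$ for $1\le i,j\le n-1$. Thus the tangential $(n-1)\times(n-1)$ principal submatrix $S'$ of $S:=u_{xx}(x_0)$ equals $-u_n(x_0)\,\mathrm{diag}(\kappa_1,\dots,\kappa_{n-1})$, whence, by $(m-1)$-homogeneity of $T_{m-1}$ and the definition of the $(m-1)$-curvature,
\[
T_{m-1}(S')=(-u_n(x_0))^{m-1}\,T_{m-1}(\mathrm{diag}(\kappa_1,\dots,\kappa_{n-1}))=(-u_n(x_0))^{m-1}\,\mathbf k_{m-1}[\partial\Omega](x_0)=0
\]
by the hypothesis $\mathbf k_{m-1}[\partial\Omega](x_0)=0$.

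To reach a contradiction I would invoke the restriction property of the cone of $m$-admissible matrices: every $(n-1)\times(n-1)$ principal submatrix of an $m$-admissible $n\times n$ symmetric matrix is again $(m-1)$-admissible. (This is standard; it follows, for instance, from Cauchy's interlacing theorem for eigenvalues together with the fact that $T_1,\dots,T_{m-1}$ are strictly increasing in each eigenvalue on the admissible cone, cf.\ \cite{CNS85}, \cite{IF14fix}.) Applied to $S$ and its tangential submatrix $S'$ this gives $T_{m-1}(S')>0$, contradicting the displayed identity; hence no such $u$ exists and (\ref{ems}) holds. I expect the one point needing care to be exactly this restriction lemma — together with the remark that the strict inequality $T_m[u]>0$ places $u_{xx}$ in the \emph{open} admissible cone, so that $T_{m-1}(S')>0$ is genuinely strict — while the adapted-coordinate computation, which is what makes the second fundamental form of $\partial\Omega$, and hence $\mathbf k_{m-1}[\partial\Omega](x_0)$, enter the tangential Hessian of $u$, is entirely routine.
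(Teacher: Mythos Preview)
The paper does not prove this theorem; it merely quotes it with a reference to \cite{IF14fix}. So there is no in-paper argument to compare against. Your boundary-coordinate computation and the appeal to the restriction inequality $T_{m-1;i}(S)>0$ for $S\in K_m$ (which is exactly (\ref{der}) in the paper) constitute the standard route, and the part you flag as ``needing care'' --- the restriction lemma --- is in fact routine here.

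The actual gap is earlier, in your parenthetical ``recall that $T_m[u]>0$ means $u$ is $m$-admissible''. As a pointwise statement this is false: the set $\{S\in{\rm Sym}(n):T_m(S)>0\}$ is strictly larger than the cone $K_m$ (for even $m$ it contains $-K_m$ as well), so $T_m[u]>0$ alone does not yield $T_j[u]>0$ for $j<m$, and hence does not by itself entitle you to invoke (\ref{der}) at $x_0$. This is precisely the place where the boundary condition $u\arrowvert_{\partial\Omega}=\mathrm{const}$ must do work. The fix is the connectedness argument the paper spells out just after (\ref{Km}): a nonconstant $u$ with constant boundary values has an interior extremum $x_1$, where $u_{xx}(x_1)$ is semidefinite; combined with $T_m(u_{xx}(x_1))>0$ this forces $u_{xx}(x_1)\in K_m\cup(-K_m)$. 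Since $\{u_{xx}(x):x\in\bar\Omega\}$ is a connected subset of the open set $\{T_m>0\}$ and $K_m$ is one of its connected components, one obtains $u_{xx}\in K_m$ throughout $\bar\Omega$ (after replacing $u$ by $-u$ if necessary when $m$ is even, which leaves the hypotheses intact). Once this step is supplied, your contradiction at $x_0$ goes through as written.
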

Notice that $m=1$ is excluded from Theorem 2.5 because $\mathbf k_0[\partial\Omega]=1$ by definition.
Relation (\ref{ems}) shows that in contrast to the linear elliptic equations the theory of $m$-Hessian operators, $m>1$, is non local matter.

On the other hand, Theorem 3 from the paper
\cite{CNS85}, page 264,
contains some positive information. In our notations a slightly modified version of this theorem reads as
\begin{Th}
Let $f\in C^{2+\alpha}(\bar\Omega)$, $\partial\Omega\in C^{4+\alpha}$, $0<\alpha<1$. Assume that $f>0$ in $\bar\Omega$, $\mathbf k_{m-1}[\partial\Omega]>0$. Then the Dirichlet problem
\begin{equation}T_m(u_{xx})=f,\quad u\arrowvert_{\partial\Omega}={\rm const},\quad 1\leqslant m\leqslant n,\label{H}\end{equation}
admits a solution $u\in C^{4+\alpha}(\bar\Omega)$. Moreover, if in (\ref{H}) ${\rm const}\neq 0$ or $m=2k-1$, $u$ is a unique in $C^2(\Omega)$. In the case $m=2k$ and $u$ is vanishing on $\partial\Omega$, there are two solutions in $C^2(\Omega)$ : $u$, $-u$.
\end{Th}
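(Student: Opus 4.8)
\emph{Proof sketch.} This is essentially Theorem~3 of \cite{CNS85}; I indicate the structure of the argument rather than the computations. The tool is the method of continuity. Consider a family $T_m(u_{xx})=f_t$, $u|_{\partial\Omega}=\mathrm{const}$, $t\in[0,1]$, with $f_t>0$ on $\bar\Omega$, $f_1=f$, and with $f_0$ (together with $\Omega$) chosen so that solvability with an $m$-admissible solution is classical --- for instance by deforming $\Omega$ to a ball, where radial solutions come from an ODE; I do not detail this reduction. Here ``$m$-admissible'' means $\lambda(u_{xx})\in\Gamma_m:=\{\lambda\in\mathbb R^n:\ S_j(\lambda)>0,\ j=1,\dots,m\}$, the G\aa rding cone, on which $F:=T_m^{1/m}$ is concave in the matrix entries and the matrix $\partial T_m/\partial u_{ij}$ is positive definite. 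Let $\mathcal T$ be the set of $t\in[0,1]$ for which an $m$-admissible $u_t\in C^{4+\alpha}(\bar\Omega)$ exists. Then $0\in\mathcal T$, and $\mathcal T$ is open: the linearisation at $u_t$ is a uniformly elliptic linear operator near $u_t$ with $C^{2+\alpha}$ coefficients and zero Dirichlet data, hence invertible on $C^{4+\alpha}$ by Schauder theory, so the implicit function theorem applies. Everything thus reduces to closedness of $\mathcal T$, i.e. to a priori estimates $\|u_t\|_{C^{4+\alpha}(\bar\Omega)}\le C$ uniform in $t$, together with a uniform separation of $\lambda((u_t)_{xx})$ from $\partial\Gamma_m$.

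These estimates are obtained in the classical order. A $C^0$ bound follows from the comparison principle for the degenerate-elliptic monotone operator $T_m$, comparing with sub/supersolutions built from the distance to $\partial\Omega$; the subsolution is $m$-admissible near the boundary precisely because $\mathbf k_{m-1}[\partial\Omega]>0$. A $C^1$ bound follows from a boundary gradient estimate via barriers (again admissible near $\partial\Omega$ only thanks to $\mathbf k_{m-1}[\partial\Omega]>0$) and an interior gradient estimate obtained by differentiating the equation once and invoking the maximum principle, a Bernstein-type argument made possible by the concavity of $F$ on $\Gamma_m$. The decisive step is the bound on $\sup_{\bar\Omega}|u_{xx}|$: differentiating the equation twice and using the concavity of $F$, the maximum principle reduces it to $\sup_{\partial\Omega}|u_{xx}|$. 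On $\partial\Omega$ the tangential-tangential components are read off from the constant boundary data (they equal the second fundamental form times the normal derivative modulo lower-order terms, and $\mathbf k_{m-1}[\partial\Omega]>0$ keeps the resulting tangential Hessian inside $\Gamma_m$), the mixed tangential-normal components are handled by a barrier, and the double-normal second derivative --- the genuinely delicate point --- is estimated from the algebraic structure of $T_m$ restricted to $\partial\Omega$, together with the bounds already in hand and, once more, the $(m-1)$-convexity of $\partial\Omega$.

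With $\|u_t\|_{C^2(\bar\Omega)}\le C$ established, the equation is uniformly elliptic along the whole path; the Evans--Krylov theorem (whose hypothesis is exactly the concavity of $F$) gives an interior $C^{2+\alpha}$ bound and Krylov's boundary estimate a global one, and Schauder iteration then yields the uniform $C^{4+\alpha}$ bound, using $f\in C^{2+\alpha}(\bar\Omega)$ and $\partial\Omega\in C^{4+\alpha}$. Hence $\mathcal T=[0,1]$ and $u=u_1$ solves the problem. Uniqueness within the admissible class is the comparison principle for $T_m$ on admissible functions: two admissible solutions with the same constant boundary value must coincide. The symmetry statement is then elementary: for even $m=2k$ one has $T_m(-u_{xx})=(-1)^m T_m(u_{xx})=f$ and the zero boundary value is unchanged, so $-u$ is a second $C^2$ solution (and, since $S_1(-\lambda)=-S_1(\lambda)$, only one of $\pm u$ is admissible); for odd $m=2k-1$, $T_m(-u_{xx})=-f<0$, so $-u$ is not a solution, while for $\mathrm{const}\ne0$ the function $-u$ has boundary value $-\mathrm{const}\ne\mathrm{const}$ and solves a different Dirichlet problem --- in both of these cases the solution is unique in $C^2(\Omega)$.

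I expect the main obstacle to be the boundary estimate for the double-normal second derivative: this is the point at which the hypothesis $\mathbf k_{m-1}[\partial\Omega]>0$ enters essentially and non-obviously, and it is the technical heart of \cite{CNS85}.
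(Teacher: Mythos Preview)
The paper does not actually prove this theorem: it is stated as ``a slightly modified version'' of Theorem~3 of \cite{CNS85} and is simply quoted, with no argument given. Your sketch is a faithful outline of the Caffarelli--Nirenberg--Spruck continuity method and correctly identifies the double-normal boundary second-derivative estimate as the crux and the place where the hypothesis $\mathbf k_{m-1}[\partial\Omega]>0$ is essential; in that sense it goes well beyond what the present paper offers.

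One point worth tightening: the theorem asserts uniqueness \emph{in $C^2(\Omega)$}, not merely among $m$-admissible solutions, whereas your comparison-principle argument yields only the latter. To close the gap you need the observation (which the paper in fact records just after~(\ref{Km})) that $\{S:T_m(S)>0\}$ decomposes into connected components, $K_m$ being the one containing the positive-definite matrices, together with the fact that a $C^2$ function with constant boundary values attains an interior extremum, forcing $u_{xx}$ into $K_m$ (or, for even $m$ with zero boundary data, into $-K_m$). Without this step a hypothetical $C^2$ solution with Hessian in some other component of $\{T_m>0\}$ is not excluded by the comparison principle alone.
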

The further development is restricted to the functional sets, supported by Theorem 2.6:
\begin{equation}{\stackrel{\circ}{\mathbb K}}\vphantom{\mathbb K}_m(\bar\Omega)=\{u\in {\stackrel{\circ}{C}}\vphantom{C}^2(\bar\Omega):\; T_m[u]>0,\; u\leqslant0\},\quad 1\leqslant m\leqslant n,\label{K0}\end{equation}
which are  sub-cones of the well known by now cones of $m$-admissible in $\bar\Omega$ functions. They admit many equivalent definitions (see for inst.
\cite{IPY})
and are denoted by different symbols (compare
\cite{Iv83}, \cite{CNS85}, \cite{TW97}).
The constructive definition of the cone of $m$-admissible functions has been given in the paper
\cite{Iv83}
and in updated notations reads as
\begin{equation}\mathbb K_m(\bar\Omega)=\{u\in C^2(\bar\Omega):\; T_p[u]>0,\; p=1,2,\dots,m\},\quad 1\leqslant m\leqslant n.\label{adm}\end{equation}
We show that
$$\mathbb K_m(\bar\Omega)\cap\{u\arrowvert_{\partial\Omega}=0\}={\stackrel{\circ}{\mathbb K}}\vphantom{\mathbb K}_m(\bar\Omega).$$
If $u\in\mathbb K_m(\bar\Omega)$, then $u_{xx}$ can not be negative definite matrix in any point of $\Omega$. So $u$ has no maximums in $\Omega$ and the requirement $u\arrowvert_{\partial\Omega}=0$ provides $u\leqslant 0$ in $\bar\Omega$.
In order to prove the reverse implication we consider a matrix analog of cone (\ref{adm}). Denote by ${\rm Sym}(n)$ the space of symmetric $n\times n$-matrices:
\begin{equation}\label{Km}K_m=\{S\in {\rm Sym}(n): \;T_p(S)>0,\; p=1,2,\dots,m\},\quad 1\leqslant m\leqslant n.\end{equation}
Let $S_0$ be a positive definite matrix. It is well known that $K_m$ is a connected in ${\rm Sym}(n)$ component of the set $\{S:T_m(S)>0\}$, containing $S_0$ (see for instance \cite{G59}, \cite{IPY}, \cite{IF14fix}, \cite{IF13}). A function $u\in{\stackrel{\circ}{\mathbb K}}\vphantom{\mathbb K}_m(\bar\Omega)$ attains minimum (may be not strong) in $\Omega$. Hence, the connected set $\{u_{xx},x\in\bar\Omega\}$ contains a positive definite matrix and requirement $T_m(u_{xx})>0$, $x\in\bar\Omega$ provides $u_{xx}\in K_m$, $x\in\bar\Omega$, i. e. $u\in\mathbb K_m(\bar\Omega)$.

The matching of definitions (\ref{K0}), (\ref{adm}) demonstrates once again the nonlocal nature of $m$-admissible functions.

\section{On the variational problems I}
It is natural to associate with forms (\ref{ef}) the integrals
\begin{equation}\label{gint}\int_\Omega h(x)\omega_{p,n-p}[v],\quad\Omega\subset\mathbb R^n,\quad v=(v^1,\dots,v^n),\quad p=1,\dots,n,\end{equation}
and speak about some volumes generated by $v$ if $h(x)>0,x\in\Omega$. If $v=u_x$, $h=-u$,  integrals (\ref{gint}) may be written in the following form (see (\ref{sim})):
$$H_m[u]:=-\int_\Omega uS_m[D^2u]dx.$$
Functional $H_n[u]$ was introduced in the paper
\cite{Ts},
while the paper
\cite{W94}
covers all $0<m\leqslant n$ and functionals $H_m[u]$, $m=1,\dots,n$. Therein these functionals have been named Hessian integrals. Later on the ideas from this paper were developed by many authors. For instance, in
\cite{ChW}
Hessian integrals were applied to study some analogs of the problems from the theory of semi-linear elliptic equations. Some properties of Hessian integrals discovered in the paper
\cite{TW98}
are of particular interest in context of our further proceeding.

We consider Hessian integrals from some different point of view and to begin with write them out in our notations:
\begin{equation}I_p[u]:=\int_\Omega(-u)\omega_{p,n-p}[u_x]=\int_\Omega (-u)T_p[u]dx,\quad u\in\mathbb {\stackrel{\circ}{\mathbb K}}\vphantom{\mathbb K}_p(\bar\Omega),\label{mFl}\end{equation}
$ p=0,\dots,n$.
Our goal is to compare these functionals for different $p$ and we set up the isoperimetric problem: to  find $\underline u$, which minimizes $I_m[u]$ in $\mathbb {\stackrel{\circ}{\mathbb K}}\vphantom{\mathbb K}_m[\Omega]$ under condition $I_l[u]=1$, $0\leqslant l<m\leqslant n$. In other words, we are looking for $\underline u$ such that
\begin{equation}I_m[\underline u]\leqslant I_m[u],\quad \underline u, u\in {\stackrel{\circ}{\mathbb K}}\vphantom{\mathbb K}_m(\bar\Omega)\cap\{I_l[u]=1\},\quad 0\leqslant l<m\leqslant n.\label{Isp}\end{equation}
The correctness of setting (\ref{Isp}) confirms
\begin{lemma}
Let $u \in C^2(\Omega)\cap{\stackrel{\circ}{C}}\vphantom{C}^1(\bar\Omega)$. Assume $I_p[u]=1$. Then the first variation of the functional (\ref{mFl}) is nonzero on $u$.
\end{lemma}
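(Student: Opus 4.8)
The plan is to compute the first variation of the Hessian integral $I_p$ at $u$ explicitly and then evaluate it in one concrete direction, namely $\eta=u$. Since the functional (\ref{mFl}) lives on functions vanishing on $\partial\Omega$, an admissible variation is an arbitrary $\eta\in C^2(\Omega)\cap{\stackrel{\circ}{C}}\vphantom{C}^1(\bar\Omega)$, and differentiation under the integral sign gives
\[
\delta I_p[u](\eta)=\frac{d}{dt}\Big|_{t=0}I_p[u+t\eta]=\int_\Omega(-\eta)\,T_p[u]\,dx+\int_\Omega(-u)\,\frac{\partial T_p[u]}{\partial u_{ij}}\,\eta_{ij}\,dx .
\]
I would then integrate the second term by parts twice: the boundary term produced by the first integration carries the factor $u$ and vanishes because $u|_{\partial\Omega}=0$, and the boundary term produced by the second carries the factor $\eta$ and vanishes because $\eta|_{\partial\Omega}=0$, so that
\[
\int_\Omega(-u)\,\frac{\partial T_p[u]}{\partial u_{ij}}\,\eta_{ij}\,dx=-\int_\Omega\frac{\partial^2}{\partial x^i\partial x^j}\Bigl(u\,\frac{\partial T_p[u]}{\partial u_{ij}}\Bigr)\eta\,dx .
\]

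Here the skew symmetry enters: $T_p$ is $p$-homogeneous and skew symmetric, so Corollary~2.3, i.e. identity (\ref{Id}) with $F=T_p$, $m=p$, applies and turns $\partial_i\partial_j\bigl(u\,\partial T_p[u]/\partial u_{ij}\bigr)$ into $p\,T_p[u]$. Substituting and collecting the two contributions yields the closed form
\[
\delta I_p[u](\eta)=(p+1)\int_\Omega(-\eta)\,T_p[u]\,dx ,
\]
which is also exactly the formula needed in Section~3 to write down the Euler equation of the isoperimetric problem (\ref{Isp}). It then remains to display a direction in which $\delta I_p[u]$ does not vanish. Taking $\eta=u$ — an admissible variation, since ${\stackrel{\circ}{\mathbb K}}\vphantom{\mathbb K}_p(\bar\Omega)$ is a cone and $u|_{\partial\Omega}=0$ — we obtain
\[
\delta I_p[u](u)=(p+1)\int_\Omega(-u)\,T_p[u]\,dx=(p+1)\,I_p[u]=p+1>0 ,
\]
so the first variation at $u$ is a nontrivial linear functional. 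Equivalently, one may avoid computing $\delta I_p$ altogether and argue from $p$-homogeneity of $T_p$: $I_p[tu]=t^{p+1}I_p[u]$ for $t>0$, hence $\frac{d}{dt}\big|_{t=1}I_p[tu]=(p+1)I_p[u]=p+1\neq0$, and the left-hand side is precisely $\delta I_p[u](u)$.

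The only delicate point, and the place where I expect to spend some care, is the legitimacy of the double integration by parts: for $u$ merely of class $C^2$ the coefficients $\partial T_p[u]/\partial u_{ij}$ are only continuous, so $\partial_i\partial_j\bigl(u\,\partial T_p[u]/\partial u_{ij}\bigr)$ must be read in the sense in which (\ref{Id}) is asserted. This is handled in the usual way — first assuming $u\in C^4(\bar\Omega)$ and passing to the limit, or noting that the third-order derivatives of $u$ enter (\ref{Id}) only through the divergence-free combination $\partial_j\bigl(\partial T_p[u]/\partial u_{ij}\bigr)\equiv0$ of statement $(ii)$ of Theorem~2.1, so that (\ref{Id}) reduces to the Euler relation $u_{ij}\,\partial T_p[u]/\partial u_{ij}=p\,T_p[u]$, valid for every $C^2$ function. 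This regularity bookkeeping, rather than anything conceptual, is the main obstacle.
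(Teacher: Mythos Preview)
Your proof is correct and follows essentially the same route as the paper: both derive the variation formula $\delta I_p[u](\eta)=-(p+1)\int_\Omega \eta\,T_p[u]\,dx$ via integration by parts and identity~(\ref{Id}). The only cosmetic difference is in the concluding step --- the paper argues by contradiction (if the variation vanished identically then $T_p[u]\equiv 0$, contradicting $I_p[u]=1$), whereas you evaluate directly at $\eta=u$; your homogeneity shortcut and the regularity discussion are welcome additions but not needed for the argument.
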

\begin{proof}
Indeed, let $\tilde u=u+th$, where $h$ is an arbitrary function from $C^2(\Omega)\cap{\stackrel{\circ}{C}}\vphantom{C}^1(\bar\Omega)$, $t\in\mathbb R$. Then
$$\frac{d}{dt}I_p[\tilde u]=-\int_\Omega(hT_p[\tilde u]+\tilde uT_p^{ij}[\tilde u]h_{ij})dx,\quad T_p^{ij}[\tilde{u}]=\frac{\partial T_p[\tilde{u}]}{\partial\tilde{u}_{ij}},\quad 1\leqslant i,j\leqslant n.$$
It follows from integration by parts and (\ref{Id}) that
\begin{equation}\frac{d}{dt}I_p[\tilde u]=-(p+1)\int_\Omega hT_p[\tilde u]dx.\label{ani}\end{equation}
Assume that $$\delta I_p[u]=\frac{d}{dt}I_p[\tilde u]\arrowvert_{t=0}=0.$$
Then relation (\ref{ani}) is equivalent to $T_p[u]\equiv0$. But it contradicts to the assumption $I_p[u]=1$, what validates Lemma 3.1.
\end{proof}

Notice that the correctness of problem (\ref{Isp}) is a consequence of identity (\ref{Id}), i.e., of the skew symmetry of $m$-Hessian operators. Next, we expose a link between the isoperimetric problem (\ref{Isp}) and Hessian quotients (\ref{qua}).

\begin{Th}
Let $0\leqslant l< m\leqslant n$. Assume there is $w\in\mathbb {\stackrel{\circ}{\mathbb K}}\vphantom{\mathbb K}_m[\bar\Omega]$ such that
\begin{equation}T_{m,l}[w]:=\frac{T_m[w]}{T_l[w]}=1.\label{quo}\end{equation}
Then there exists $\underline u$ satisfying (\ref{Isp}) and
\begin{equation}I_m[u]\geqslant I_m[\underline u]=I_m^\frac{l-m}{l+1}[w],\quad u\in {\stackrel{\circ}{\mathbb K}}\vphantom{\mathbb K}_m(\bar\Omega)\cap\{I_l[u]=1\}.\label{isw}\end{equation}
\end{Th}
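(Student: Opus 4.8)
The plan is to produce the minimizer in (\ref{Isp}) explicitly as a rescaling of $w$, to read its value off the pointwise identity $T_m[w]=T_l[w]$, and to upgrade this conditional critical point to a global minimum by combining the Euler--Lagrange analysis with the uniqueness of the solution of (\ref{w1}).

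First I would use homogeneity: since $T_p(u_{xx})$ is $p$-homogeneous in $u_{xx}$, one has $I_p[tu]=t^{p+1}I_p[u]$ and $T_{m,l}[tu]=t^{m-l}T_{m,l}[u]$ for $t>0$, so the problem (\ref{Isp}) is equivalent, after the substitution $u\mapsto u/I_l[u]^{1/(l+1)}$, to minimizing the scale-invariant quotient $J_{m,l}$ of (\ref{vw}) over ${\stackrel{\circ}{\mathbb K}}\vphantom{\mathbb K}_m(\bar\Omega)$. Given $w$ with $T_{m,l}[w]=1$, put $t:=I_l[w]^{-1/(l+1)}$ and $\underline u:=tw$; since ${\stackrel{\circ}{\mathbb K}}\vphantom{\mathbb K}_m(\bar\Omega)$ is a cone, $\underline u\in{\stackrel{\circ}{\mathbb K}}\vphantom{\mathbb K}_m(\bar\Omega)$, and $I_l[\underline u]=1$. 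Because $T_m[w]=T_l[w]$ as functions on $\Omega$, definition (\ref{mFl}) gives immediately $I_m[w]=\int_\Omega(-w)T_m[w]\,dx=\int_\Omega(-w)T_l[w]\,dx=I_l[w]$, whence
$$I_m[\underline u]=t^{m+1}I_m[w]=I_l[w]^{-\frac{m+1}{l+1}}I_m[w]=I_m[w]^{\,1-\frac{m+1}{l+1}}=I_m^{\frac{l-m}{l+1}}[w],$$
which is exactly the right-hand side of (\ref{isw}).

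Next the variational identification. By (\ref{ani}) the first variations are $\delta I_p[u](h)=-(p+1)\int_\Omega hT_p[u]\,dx$, and Lemma~3.1 guarantees that $\delta I_l[u]\neq0$ on the constraint, so the Lagrange multiplier rule applies: a conditional critical point of $I_m$ on $\{I_l=1\}$ satisfies $(m+1)T_m[u]=\mu(l+1)T_l[u]$ in $\Omega$, that is $T_{m,l}[u]\equiv{\rm const}>0$, and one checks that $\underline u=tw$ satisfies this. To pass from ``critical point'' to ``minimum'' I would apply the direct method: the embedding properties of Hessian integrals give $\inf_{{\stackrel{\circ}{\mathbb K}}\vphantom{\mathbb K}_m(\bar\Omega)}J_{m,l}>0$, and a minimizing sequence, again by those embeddings, has a subsequence converging to some $u_\ast\in{\stackrel{\circ}{\mathbb K}}\vphantom{\mathbb K}_m(\bar\Omega)$ that realizes the infimum; $u_\ast$ is then a conditional critical point, so $T_{m,l}[u_\ast]\equiv{\rm const}$, and after rescaling $u_\ast$ becomes a nonpositive solution of (\ref{w1}), hence equals $w$ by the uniqueness in Theorem~1.1. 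Therefore $\inf J_{m,l}=J_{m,l}[w]=I_m^{\frac{l-m}{(l+1)(m+1)}}[w]$, which unwinds via homogeneity to (\ref{isw}), and the explicit $\underline u$ above solves (\ref{Isp}).

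The main obstacle is the compactness step: one must show that a minimizing sequence converges to a genuinely $m$-admissible function — not to a limit that is only $l$-admissible, or where $T_m$ degenerates to zero on part of $\Omega$, or that fails to vanish cleanly on $\partial\Omega$ — which rests on the regularity and stability theory for the operators $T_p$ and the cones ${\stackrel{\circ}{\mathbb K}}\vphantom{\mathbb K}_p(\bar\Omega)$ rather than on soft compactness arguments. A route avoiding the direct method would be to prove (\ref{isw}) by a direct chain of estimates: via the divergence identity (\ref{Id}) rewrite $\int_\Omega(-u)T_m[w]\,dx=\frac{1}{m}\int_\Omega u_iw_jT_m^{ij}[w]\,dx$ and $\int_\Omega(-u)T_l[w]\,dx=\frac{1}{l}\int_\Omega u_iw_jT_l^{ij}[w]\,dx$, apply the Cauchy--Schwarz inequality with respect to the positive definite forms $T_m^{ij}[w]$ and $T_l^{ij}[w]$ (positivity holds since $w$ is $m$-admissible), and close with the pointwise Newton--Maclaurin inequalities for matrices in $K_m$; the hypothesis $T_{m,l}[w]=1$ is then used precisely to turn every inequality in the chain into an equality at $u=\underline u$, which at once proves minimality and re-identifies the extremal value.
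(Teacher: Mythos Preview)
Your core approach coincides with the paper's: both identify the Euler--Lagrange equation $(m+1)T_m[u]=(l+1)\lambda T_l[u]$ via Lemma~3.1, take $\underline u$ to be the rescaling $\mu w$ of the given solution $w$, and read off $I_m[\underline u]=I_m^{(l-m)/(l+1)}[w]$ from $I_m[w]=I_l[w]$ and homogeneity. The computations you give are exactly those in the paper.

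Where you diverge is in rigor. The paper's proof is entirely formal at the step you flag as problematic: after observing that $\underline u=\mu w$ satisfies both the constraint $I_l=1$ and the Euler--Lagrange equation, the paper simply writes ``and hence solves the problem (\ref{Isp})'', with no existence argument, no compactness, and no appeal to uniqueness of the solution of (\ref{w1}). Your direct-method paragraph and your Cauchy--Schwarz alternative are attempts to close a gap that the paper itself leaves open. So the ``main obstacle'' you identify is real, but it is not an obstacle to reproducing the paper's argument --- the paper is content to treat (\ref{Isp}) as a classical isoperimetric problem and to declare the stationary point a minimizer without further justification. If you want only to match the paper, your second paragraph already does so; your third and fourth paragraphs go beyond it.
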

\begin{proof}
The problem (\ref{Isp}) is a classic isoperimetric variational problem. Due to Lemma 3.1 there exists Lagrange multiplier $\lambda$ such that a minimizer to the functional
$$\int_\Omega-u(T_m[u]-\lambda T_l[u])dx,\quad u\in{\stackrel{\circ}{\mathbb K}}\vphantom{\mathbb K}_m(\bar\Omega),$$
solves the problem (\ref{Isp}). Hence, we are looking for solutions to the Euler -- Lagrange equation
\begin{equation}(m+1)T_m[u]-(l+1)\lambda T_l[u]=0,\label{EL}\end{equation}
what follows from (\ref{ani}). Since only functions $u\in{\stackrel{\circ}{\mathbb K}}\vphantom{\mathbb K}_m(\bar\Omega)$ are of interest, a multiplier $\lambda$ has to be positive.
Denote
$$\mu^{m-l}=\frac{l+1}{m+1}\lambda.$$
Then equation (\ref{EL}) turns into $T_{m,l}[u]=\mu^{m-l}$.
The function $\underline u=\mu w$, where $w$ is a solution to (\ref{quo}), satisfies condition in (\ref{Isp}), and hence solves the problem (\ref{Isp}). Moreover, we have constructed the sharp estimate:
$$I_m[w]=I_l[w]=\frac{1}{\mu^{l+1}}I_l[\underline u]=\frac{1}{\mu^{l+1}}\quad \Rightarrow\quad \mu=I^{-\frac{1}{l+1}}[w],$$
$$I_m[u]\geqslant I_m[\underline u]=\mu^{m+1}I_m[w]=I_m^\frac{l-m}{l+1}[w],\quad u\in{\stackrel{\circ}{\mathbb K}}\vphantom{\mathbb K}_m(\bar\Omega)\cap\{I_m[u]=1\}.$$
\end{proof}
An auxiliary Dirichlet problem (\ref{quo}) has appeared in the paper
\cite{TW98}
as a crucial tool to derive Poincare type inequalities for functionals $I_m[u]$, $1\leqslant m\leqslant n$, interpreted in a weak sense. For $u\in C^2(\bar\Omega)$ these inequalities spring up as a simple consequence of (\ref{isw}) and we write out their equivalents in
\begin{Cor}
Let $0\leqslant l\leqslant m\leqslant n$ and $w$ satisfies the equation (\ref{quo}).
Then
\begin{equation}\left(\frac{I_m[u]}{I_m[w]}\right)^\frac{1}{m+1}\geqslant\left(\frac{I_l[u]}{I_l[w]}\right)^\frac{1}{l+1},\quad u\in{\stackrel{\circ}{\mathbb K}}\vphantom{\mathbb K}_m(\bar\Omega).
\label{Poi}\end{equation}
\end{Cor}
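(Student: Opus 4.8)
The plan is to obtain (\ref{Poi}) as an immediate consequence of the sharp estimate (\ref{isw}) of Theorem 3.2, using only the homogeneity of the Hessian integrals; note that the hypothesis on $w$ here is exactly the hypothesis of Theorem 3.2, so (\ref{isw}) is at our disposal. First I would record the scaling law: since $T_p(u_{xx})$ is $p$-homogeneous in $u_{xx}$, definition (\ref{mFl}) gives $I_p[tu]=t^{p+1}I_p[u]$ for all $t>0$ and $0\leqslant p\leqslant n$. I would also note that all quantities in (\ref{Poi}) are strictly positive: every $u\in{\stackrel{\circ}{\mathbb K}}\vphantom{\mathbb K}_m(\bar\Omega)$ is non-trivial (else $T_m[u]\equiv0$) and satisfies $u\leqslant0$, so $-u$ is a nonnegative continuous function positive on a nonempty open subset of $\Omega$; since $T_l[u]>0$ in $\Omega$ (part of the definition of $\mathbb K_m$ when $l\geqslant1$, and $T_0\equiv1$ when $l=0$), we get $I_l[u]>0$ and likewise $I_m[u]>0$, and the same applied to $w$ (non-trivial, with $T_m[w]=T_l[w]>0$ by (\ref{quo})) gives $I_m[w]>0$, $I_l[w]>0$. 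If $l=m$, (\ref{Poi}) is a trivial identity, so I assume $l<m$ from now on.

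Given an arbitrary $u\in{\stackrel{\circ}{\mathbb K}}\vphantom{\mathbb K}_m(\bar\Omega)$, I would normalize it: set $t:=I_l[u]^{-1/(l+1)}$ and $v:=tu$. Since ${\stackrel{\circ}{\mathbb K}}\vphantom{\mathbb K}_m(\bar\Omega)$ is invariant under multiplication by positive constants, $v\in{\stackrel{\circ}{\mathbb K}}\vphantom{\mathbb K}_m(\bar\Omega)$, and the scaling law gives $I_l[v]=t^{l+1}I_l[u]=1$, so $v$ is admissible for the isoperimetric problem (\ref{Isp}). Applying (\ref{isw}) to $v$ and substituting $I_m[v]=t^{m+1}I_m[u]=I_l[u]^{-\frac{m+1}{l+1}}I_m[u]$, I obtain
\[I_m[u]\geqslant I_l[u]^{\frac{m+1}{l+1}}\,I_m[w]^{\frac{l-m}{l+1}}.\]

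It remains to eliminate the exponent on $I_m[w]$ via the identity $I_m[w]=I_l[w]$, which holds because (\ref{quo}) forces $T_m[w]=T_l[w]$ pointwise, so that $\int_\Omega(-w)T_m[w]\,dx=\int_\Omega(-w)T_l[w]\,dx$. Raising the last display to the power $1/(m+1)$ and using $\frac{l-m}{(l+1)(m+1)}=\frac{1}{m+1}-\frac{1}{l+1}$, the factor $I_m[w]^{\frac{l-m}{(l+1)(m+1)}}$ splits as $I_m[w]^{\frac{1}{m+1}}I_l[w]^{-\frac{1}{l+1}}$, and a routine rearrangement produces exactly (\ref{Poi}). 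There is essentially no obstacle: the corollary is a one-step deduction from Theorem 3.2, the only steps needing attention being the homogeneity reduction to the normalized case and the identity $I_m[w]=I_l[w]$; the strict positivity of $I_l[u]$, which legitimizes the normalization $v=tu$, is the single point one should not skip.
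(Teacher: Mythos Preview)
Your argument is correct and is essentially the same as the paper's: both normalize $u$ by the factor $I_l[u]^{-1/(l+1)}$ to land in the constraint set $\{I_l=1\}$, invoke the sharp bound (\ref{isw}) from Theorem 3.2, and then rewrite $I_m[w]^{\frac{l-m}{(l+1)(m+1)}}$ using $I_m[w]=I_l[w]$. You supply a bit more detail on positivity and on the identity $I_m[w]=I_l[w]$ than the paper does, but the route is identical.
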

\begin{proof}
Indeed, defined by the line $u=I_l^\frac{1}{l+1}[u]\tilde u$ function $\tilde u$ belongs to ${\stackrel{\circ}{\mathbb K}}\vphantom{\mathbb K}_m(\bar\Omega)\cap\{I_l[u]=1\}$. It follows from (\ref{isw}) that
\begin{equation}I_m^\frac{1}{m+1}[u]=I_l^\frac{1}{l+1}[u]I_m^\frac{1}{m+1}[\tilde u]\geqslant I_l^\frac{1}{l+1}[u]I_m^{\frac{l-m}{(l+1)(m+1)}}[w]=I_l^\frac{1}{l+1}[u]I_m^{\frac{1}{m+1}-\frac{1}{l+1}}[w].\label{ss}\end{equation}
\end{proof}
Notice that inequality (\ref{Poi}) is a symmetrized form of restricted to $u\in{\stackrel{\circ}{\mathbb K}}\vphantom{\mathbb K}_m(\bar\Omega)$ inequality (1.13) from
\cite{TW98}.
Also a solution $w_\mu\in{\stackrel{\circ}{\mathbb K}}\vphantom{\mathbb K}_m(\bar\Omega)$ to equation $T_{m,l}[w]=\mu^2$ with an arbitrary $\mu\in\mathbb R^+$ may be taken in capacity of $w$ in relation (\ref{Poi}).

Properties of solutions to equation (\ref{quo}) from ${\stackrel{\circ}{\mathbb K}}\vphantom{\mathbb K}_m(\bar\Omega)$ are of our special interest and the first one we present as a consequence of the sharp inequalities (\ref{isw}).
\begin{Th}
Let $0\leqslant l\leqslant p<m$. Assume that for every $p$  there is a solution $w_{m,p}\in{\stackrel{\circ}{\mathbb K}}\vphantom{\mathbb K}_m(\bar\Omega)$ to equations $T_{m,p}[w_{m,p}]=1$. Then
\begin{equation}\label{Iw}I_m^{m-l}[w_{m,l}]\geqslant I_m^{\frac{l+1}{p+1}(m-p)}[w_{m,p}]I_l^{\frac{m+1}{p+1}(p-l)}[w_{p,l}]. \end{equation}
\end{Th}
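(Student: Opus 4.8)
The plan is to deduce (\ref{Iw}) from three applications of the Poincare-type inequality (\ref{Poi}) --- equivalently, of the sharp bound (\ref{isw}) --- glued together by the trivial multiplicativity of the quotient functionals of (\ref{vw}). We may assume $l<p<m$, the case $l=p$ reducing (\ref{Iw}) to an identity.

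Write, for admissible indices $a>b$, $J_{a,b}[u]:=I_a^{1/(a+1)}[u]\big/I_b^{1/(b+1)}[u]$, so that $J_{m,l}$ is the functional of (\ref{vw}); the functions $w_{m,l},w_{m,p},w_{p,l}$ of the statement are the solutions of $T_{m,l}[w]=1$, $T_{m,p}[w]=1$, $T_{p,l}[w]=1$ in ${\stackrel{\circ}{\mathbb K}}\vphantom{\mathbb K}_m(\bar\Omega)$, ${\stackrel{\circ}{\mathbb K}}\vphantom{\mathbb K}_m(\bar\Omega)$, ${\stackrel{\circ}{\mathbb K}}\vphantom{\mathbb K}_p(\bar\Omega)$ respectively. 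The argument rests on three observations. (i) The identity
$$J_{m,l}[u]=J_{m,p}[u]\,J_{p,l}[u]$$
holds for every $u$ for which all three quotients are defined, the factor $I_p^{1/(p+1)}[u]$ cancelling. (ii) For $a>b$, the function $w_{a,b}$ is a minimizer of $J_{a,b}$ over ${\stackrel{\circ}{\mathbb K}}\vphantom{\mathbb K}_a(\bar\Omega)$ (this is (\ref{Poi}) for the pair $(a,b)$), and, since $T_{a,b}[w_{a,b}]=1$ forces $T_a[w_{a,b}]\equiv T_b[w_{a,b}]$ in $\Omega$, hence $I_a[w_{a,b}]=I_b[w_{a,b}]$, one has $J_{a,b}[w_{a,b}]=I_a^{\,\frac1{a+1}-\frac1{b+1}}[w_{a,b}]$. (iii) The $m$-admissible cones are nested, ${\stackrel{\circ}{\mathbb K}}\vphantom{\mathbb K}_m(\bar\Omega)\subset{\stackrel{\circ}{\mathbb K}}\vphantom{\mathbb K}_p(\bar\Omega)$ for $p<m$, because $\mathbb K_m(\bar\Omega)\subset\mathbb K_p(\bar\Omega)$ by the constructive definition (\ref{adm}) and ${\stackrel{\circ}{\mathbb K}}\vphantom{\mathbb K}_q(\bar\Omega)=\mathbb K_q(\bar\Omega)\cap\{u|_{\partial\Omega}=0\}$.

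Evaluate (i) at $u=w_{m,l}$. By (iii) this function lies in ${\stackrel{\circ}{\mathbb K}}\vphantom{\mathbb K}_m(\bar\Omega)\subset{\stackrel{\circ}{\mathbb K}}\vphantom{\mathbb K}_p(\bar\Omega)$, so all three quotients are defined there; applying the minimizing property of (ii) --- with $w_{m,l}$ as competitor in ${\stackrel{\circ}{\mathbb K}}\vphantom{\mathbb K}_m(\bar\Omega)$ for $J_{m,p}$, and as competitor in ${\stackrel{\circ}{\mathbb K}}\vphantom{\mathbb K}_p(\bar\Omega)$ for $J_{p,l}$ --- gives $J_{m,p}[w_{m,l}]\geqslant J_{m,p}[w_{m,p}]$ and $J_{p,l}[w_{m,l}]\geqslant J_{p,l}[w_{p,l}]$. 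Multiplying and using (i),
$$J_{m,l}[w_{m,l}]=J_{m,p}[w_{m,l}]\,J_{p,l}[w_{m,l}]\geqslant J_{m,p}[w_{m,p}]\,J_{p,l}[w_{p,l}].$$
Substituting the collapsed values from (ii) turns this into
$$I_m^{\,\frac1{m+1}-\frac1{l+1}}[w_{m,l}]\geqslant I_m^{\,\frac1{m+1}-\frac1{p+1}}[w_{m,p}]\;I_l^{\,\frac1{p+1}-\frac1{l+1}}[w_{p,l}],$$
and, since $\frac1{l+1}-\frac1{m+1}=\bigl(\frac1{p+1}-\frac1{m+1}\bigr)+\bigl(\frac1{l+1}-\frac1{p+1}\bigr)$, clearing the (negative) exponents and raising to the power $(l+1)(m+1)>0$ yields (\ref{Iw}).

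The one step carrying genuine content --- the rest being the pointwise identities $T_a[w_{a,b}]\equiv T_b[w_{a,b}]$ and the cancellation of exponents --- is the admissibility of the competitor in the middle application: one must know that $w_{m,l}$, produced as a solution of $T_{m,l}[w]=1$ in ${\stackrel{\circ}{\mathbb K}}\vphantom{\mathbb K}_m(\bar\Omega)$, also lies in the larger cone ${\stackrel{\circ}{\mathbb K}}\vphantom{\mathbb K}_p(\bar\Omega)$, so that $J_{p,l}[w_{m,l}]$ is defined and the minimality of $w_{p,l}$ may be invoked against it. This is observation (iii) above, the non-local inclusion of the $m$-admissible cones resting on the constructive definition (\ref{adm}); granting it, the rest of the scheme is entirely mechanical.
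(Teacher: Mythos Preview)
Your argument is essentially the paper's own proof: both factor $J_{m,l}=J_{m,p}\,J_{p,l}$, bound each factor below by its sharp value $c_{a,b}=J_{a,b}[w_{a,b}]=I_a^{\frac1{a+1}-\frac1{b+1}}[w_{a,b}]$, and read off $c_{m,l}\geqslant c_{m,p}c_{p,l}$, which is exactly the inequality the paper identifies with (\ref{Iw}). The only addition is that you spell out the inclusion ${\stackrel{\circ}{\mathbb K}}\vphantom{\mathbb K}_m(\bar\Omega)\subset{\stackrel{\circ}{\mathbb K}}\vphantom{\mathbb K}_p(\bar\Omega)$ needed to apply the $(p,l)$ bound to $w_{m,l}$, which the paper uses tacitly when it says ``using the inequality (\ref{ss}) twice''.
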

\vskip .1in
\begin{proof}
We use the inequality (\ref{Poi}) in the form (\ref{ss}):
$$I_m^\frac{1}{m+1}[u]\geqslant c_{m,l}I_l^\frac{1}{l+1}[u],\quad c_{m,l}=I_m^{\frac{1}{m+1}-\frac{1}{l+1}}[w_{m,l}],\quad u\in{\stackrel{\circ}{\mathbb K}}\vphantom{\mathbb K}_m(\bar\Omega).$$
A constant $c_{m,l}$ is sharp, because the above inequality turns into equality, when $u=w_{m,l}$. Using the inequality (\ref{ss}) twice, we derive
$$I_m^{\frac{1}{m+1}}[u]\geqslant c_{m,p}c_{p,l}I_l^{\frac{1}{l+1}}[u],\quad u\in{\stackrel{\circ}{\mathbb K}}\vphantom{\mathbb K}_m(\bar\Omega),$$
where a constant $c_{m,p}c_{p,l}$ is not sharp. Hence, $c_{m,l}\geqslant c_{m,p}c_{p,l}$, what coincides with relation (\ref{Iw}).
\end{proof}

\section{Some properties of the Hessian quotients}
To make Theorem 3.2 credible it is necessary to confirm the solvability of the problem (\ref{quo}) and we present some extraction from general theory.
The existence of admissible solutions to the Dirichlet problem for the Hessian quotient equations was proved in the paper
\cite{T95},
Theorem 1.1, p.153 and in the author's notations it reads as
\begin{Th}
Let $0\leqslant l<m\leqslant n$ and $\Omega$ be a bounded uniformly $(m-1)$-convex domain in $\mathbb R^n$, with $\partial\Omega\in C^{3,1}$, $\varphi\in C^{3,1}(\partial\Omega)$ and let $\psi$ be a positive function in $C^{1,1}(\bar\Omega)$. Then the Dirichlet problem,
\begin{equation}F(D^2u)=S_{m,l}(\lambda[D^2u])=\psi\quad in\quad \Omega,\quad u=\varphi\quad on\quad \partial\Omega,\label{Trq}\end{equation}
is uniquely solvable for admissible $u\in C^{3,\alpha}(\bar\Omega)$ for any $0<\alpha<1$.
\end{Th}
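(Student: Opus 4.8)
The plan is to prove Theorem 4.1 by the classical continuity method, reducing the statement to a priori estimates in $C^{2,\alpha}(\bar\Omega)$. First I would record the two structural facts about the operator $F=S_{m,l}(\lambda)$ on the cone $K_m$ of $m$-admissible matrices (\ref{Km}): it is degenerate elliptic, i.e.\ monotone nondecreasing in its matrix argument, and the renormalized operator $S_{m,l}^{1/(m-l)}$ (equivalently $\log S_{m,l}$) is \emph{concave} on $K_m$. Concavity is the decisive point: it is what lets Evans--Krylov theory upgrade a $C^2$ bound to a $C^{2,\alpha}$ bound, and it is what makes the linearization a genuine linear elliptic operator with controlled coefficients. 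Having these, I would set up the homotopy $F(D^2u_t)=\psi_t$ in $\Omega$, $u_t=\varphi$ on $\partial\Omega$, where $\psi_t$ interpolates between $\psi$ and a convenient target (a large constant, for which an explicit admissible solution is available as a suitable paraboloid, or as the solution of the pure $m$-Hessian equation furnished by Theorem 2.6). Openness of the set of $t$ for which $u_t$ exists follows from the implicit function theorem in $C^{3,\alpha}$, using invertibility of the linearized operator (strict ellipticity on admissible functions plus Schauder theory). Closedness is precisely the a priori estimate, to which the rest of the argument is devoted.

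For the estimates I would proceed in the usual order. The $C^0$ bound comes from the comparison principle: uniform $(m-1)$-convexity of $\Omega$ yields an admissible subsolution built from the defining function of $\Omega$ (or, again, from the $m$-Hessian solution), and the constant $\max_{\partial\Omega}\varphi$ is a supersolution, so $u$ is pinned between them. The $C^1$ bound splits into an interior gradient estimate — differentiate the equation once and run the maximum principle on an auxiliary function of $|Du|^2$ and $u$, using ellipticity — and a boundary gradient estimate via linear barriers built from the $(m-1)$-convex geometry. The interior $C^2$ estimate is the standard Hessian-equation argument: differentiate the equation twice, use concavity of $F$ to discard the unfavourable second-order term, and apply the maximum principle to $\sup_{|\xi|=1}u_{\xi\xi}$ multiplied by a cutoff involving $|Du|^2$ and $u$.

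The main obstacle, as always for equations of this type, is the \emph{boundary} second-derivative estimate. The tangential--tangential second derivatives on $\partial\Omega$ are controlled directly by $\|\varphi\|_{C^2(\partial\Omega)}$ together with the boundary curvature; the mixed tangential--normal derivatives are handled by a barrier argument applied to the tangentially differentiated equation; and the pure normal derivative $u_{\nu\nu}$ is then recovered algebraically from $S_{m,l}(\lambda[D^2u])=\psi$ itself, \emph{provided} the remaining (tangential) eigenvalues stay uniformly away from the boundary of $K_m$ — and it is exactly here that uniform $(m-1)$-convexity of $\partial\Omega$ enters, guaranteeing a positive lower bound for the $(m-1)$-trace of the tangential Hessian and hence a uniform upper bound for $u_{\nu\nu}$. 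With the full $C^2(\bar\Omega)$ bound in hand, concavity of $F$ and the global Evans--Krylov estimate (valid up to the $C^{3,1}$ boundary) give a uniform $C^{2,\alpha}(\bar\Omega)$ bound; the continuity method then produces a solution $u\in C^{2,\alpha}(\bar\Omega)$, and differentiating the equation and applying Schauder theory repeatedly bootstraps it to $C^{3,\alpha}(\bar\Omega)$. Uniqueness follows from the comparison principle for the elliptic operator $F$ restricted to admissible functions: the difference of two admissible solutions satisfies a linear elliptic equation with zero boundary data, hence vanishes identically.
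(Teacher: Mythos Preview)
Your outline is a faithful sketch of the standard continuity-method proof of this result, and it is essentially the argument Trudinger carried out in \cite{T95}. However, the present paper does \emph{not} prove Theorem~4.1 at all: it is quoted verbatim from \cite{T95}, Theorem~1.1, and used as a black box. The sentence immediately preceding the statement says so explicitly (``The existence of admissible solutions to the Dirichlet problem for the Hessian quotient equations was proved in the paper \cite{T95}, Theorem 1.1, p.153 and in the author's notations it reads as\ldots''). After stating the theorem, the paper merely offers three remarks on terminology and an alternative proof of the \emph{ellipticity} of $T_{m,l}$ on $K_m$ (Theorem~4.3), not of the existence theorem itself.

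So there is no ``paper's own proof'' to compare against. Your proposal is methodologically sound --- concavity of $S_{m,l}^{1/(m-l)}$, the $C^0$--$C^1$--$C^2$ estimate cascade, the boundary $u_{\nu\nu}$ bound via $(m-1)$-convexity, Evans--Krylov, and bootstrapping --- and this is indeed how \cite{T95} proceeds. If the intent of the exercise was to supply the missing proof, you have done so in outline; if it was to reproduce what the paper does, the correct answer is simply to cite \cite{T95}.
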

It is more than 20 years since this amazing theorem has been proved and now we suggest to slightly update its formulation. Namely,
\vskip .1in
$(i)$ the basis of Theorem 4.1 is a construction of a priori estimates of solutions at the boundary and the requirement ``uniformly $(m-1)$-convex domain'' is equivalent to the inequality $\mathbf k_{m-1}[\partial\Omega]>0$, what means that the hyper-surface $\partial\Omega$ is $(m-1)$-convex. The definition of $\mathbf k_{m-1}$-curvature of the hyper-surface $\partial\Omega$ and reasons for such substitute may be found in
\cite{Iv12}, \cite{IF14fix};
\vskip .1in
$(ii)$ in our argument we do not allude to the eigenvalues $\lambda[D^2u]$ and write the equation in (\ref{Trq}) as $T_{m,l}[u]=\psi$ (see (\ref{sim}), (\ref{qua})), what allows to differentiate our equations, when necessary, without preliminary passes;
\vskip .1in
$(iii)$ the assertion of Theorem 4.1 is equivalent to ``There exists the unique in  $\mathbb K_m(\bar\Omega)$ solution $u$ to the problem (\ref{Trq}) and $u\in C^{3+\alpha}(\bar\Omega)$ for any $0<\alpha<1$''.
\vskip .1in
Notice that if $\varphi=0$, a unique solution from Theorem 4.1 belongs to ${\stackrel{\circ}{\mathbb K}}\vphantom{\mathbb K}_m(\bar\Omega)$ (see description of the cones (\ref{K0}), (\ref{adm})), what means that it is unique in
\begin{equation}\label{C-}{\stackrel{\circ}{C}}\vphantom{C}^2_-(\bar\Omega):=\{u\in {\stackrel{\circ}{C}}\vphantom{C}^2(\bar\Omega):\; u\leqslant 0\}.\end{equation}
More precisely, the following consequence of Theorem 2.5 and properties of the cones (\ref{K0}) -- (\ref{Km}) is valid.

\begin{lemma}
Let $0\leqslant l<m\leqslant n$, $\partial\Omega\in C^2$. There are two possibilities:
\vskip .1in
$(i)$ if there exists $x_0\in\partial\Omega$ such that $\mathbf k_{m-1}[\partial\Omega](x_0)=0$, then
$$\{u\in {\stackrel{\circ}{C}}\vphantom{C}^2(\bar\Omega):\;T_{m,l}[u]>0\}=\emptyset;$$

$(ii)$ if $x_0$ from $(i)$ does not exist, then
$$\{u\in {\stackrel{\circ}{C}}\vphantom{C}^2_-(\bar\Omega):\;T_{m,l}[u]>0\}={\stackrel{\circ}{\mathbb K}}\vphantom{\mathbb K}_m(\bar\Omega).$$
\end{lemma}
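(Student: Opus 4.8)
The plan is to translate the hypothesis on the quotient $T_{m,l}[u]=T_m[u]/T_l[u]$ into information about the separate $m$-traces $T_p[u]$, and then to finish case $(i)$ with Theorem 2.5 and case $(ii)$ with the cone identification $\mathbb K_m(\bar\Omega)\cap\{u|_{\partial\Omega}=0\}={\stackrel{\circ}{\mathbb K}}\vphantom{\mathbb K}_m(\bar\Omega)$ already carried out above. I read ``$T_{m,l}[u]>0$'' as ``$T_l[u]$ is nowhere zero on $\bar\Omega$ and $T_m[u]/T_l[u]>0$ there'' (for $l=0$ this is, via $T_0\equiv1$, just $T_m[u]>0$). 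The first remark is that any $u$ in either of the two sets satisfies $u\not\equiv0$, since $u\equiv0$ gives $T_m[u]\equiv0$, so that $T_{m,l}[u]$ is not positive; being $C^2$ up to $\partial\Omega$ and vanishing there, such a $u$ attains at some interior $y_0\in\Omega$ a value with $|u(y_0)|=\max_{\bar\Omega}|u|>0$, so $y_0$ is a local minimum of $u$ with $D^2u(y_0)\geqslant0$ if $u(y_0)<0$, and a local maximum with $D^2u(y_0)\leqslant0$ if $u(y_0)>0$.

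For case $(i)$ I would argue by contradiction, taking $u\in{\stackrel{\circ}{C}}\vphantom{C}^2(\bar\Omega)$ with $T_{m,l}[u]>0$. If $D^2u(y_0)\geqslant0$, then every $T_p[u](y_0)\geqslant0$, so $T_l[u](y_0)>0$ (it is nonzero); since $T_l[u]$ is continuous and nowhere vanishing on the connected set $\bar\Omega$, it is positive throughout, hence $T_m[u]=T_{m,l}[u]\,T_l[u]>0$ on $\bar\Omega$, which contradicts Theorem 2.5 (here $m>1$ since $\mathbf k_0[\partial\Omega]\equiv1$, while $(i)$ furnishes $x_0$ and $\partial\Omega\in C^2$). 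If $D^2u(y_0)\leqslant0$, then the nonzero numbers $T_m[u](y_0)$ and $T_l[u](y_0)$ carry the signs $(-1)^m$ and $(-1)^l$, so $T_{m,l}[u](y_0)>0$ forces $m-l$ to be even; then $v:=-u\in{\stackrel{\circ}{C}}\vphantom{C}^2(\bar\Omega)$ satisfies $D^2v(y_0)\geqslant0$ and $T_{m,l}[v]=(-1)^{m-l}T_{m,l}[u]=T_{m,l}[u]>0$, so $v$ is covered by the first case and again contradicts Theorem 2.5. Hence the set in $(i)$ is empty.

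For case $(ii)$ the inclusion $\supseteq$ is immediate: if $u\in{\stackrel{\circ}{\mathbb K}}\vphantom{\mathbb K}_m(\bar\Omega)$, then $T_p[u]>0$ for $p=1,\dots,m$, so $T_{m,l}[u]>0$, and $u\leqslant0$, so $u\in{\stackrel{\circ}{C}}\vphantom{C}^2_-(\bar\Omega)$. For the reverse inclusion I would take $u\in{\stackrel{\circ}{C}}\vphantom{C}^2_-(\bar\Omega)$ with $T_{m,l}[u]>0$; since $u\leqslant0$ and $u\not\equiv0$, the point $y_0$ above is a minimum, $D^2u(y_0)\geqslant0$, and exactly as in case $(i)$ one obtains $T_l[u]>0$ and $T_m[u]>0$ on all of $\bar\Omega$. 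Now $D^2u(y_0)\geqslant0$ with $T_m(D^2u(y_0))=S_m(\lambda[D^2u(y_0)])>0$ has at least $m$ positive eigenvalues, so $T_p(D^2u(y_0))>0$ for $p=1,\dots,m$, i.e. $D^2u(y_0)\in K_m$ by $(\ref{Km})$; the connected set $\{D^2u(x):x\in\bar\Omega\}$ lies in $\{S\in{\rm Sym}(n):T_m(S)>0\}$ and meets its component $K_m$, hence (as in the Section 2 argument) is contained in $K_m$, whence $u\in\mathbb K_m(\bar\Omega)$ and therefore $u\in{\stackrel{\circ}{\mathbb K}}\vphantom{\mathbb K}_m(\bar\Omega)$. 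Incidentally this inclusion does not use the convexity assumption of $(ii)$, which serves (via Theorem 2.6) only to make both sides nonempty.

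The step I expect to be the main obstacle is case $(i)$ for a $u$ that is not sign-definite: Theorem 2.5 does not apply to $u$ directly, and the passage to $v=-u$ is legitimate only after noting that the signs $(-1)^m$, $(-1)^l$ of $T_m[u](y_0)$, $T_l[u](y_0)$ at an interior maximum, together with $T_{m,l}[u]>0$, already force $m-l$ to be even. The two remaining ingredients, that a positive semidefinite matrix with $T_m>0$ lies in $K_m$ and that a connected subset of $\{S:T_m(S)>0\}$ meeting the component $K_m$ lies in $K_m$, are elementary and coincide with those used in Section 2.
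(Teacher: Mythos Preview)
The paper does not supply a detailed proof of this lemma; it merely announces it as ``a consequence of Theorem 2.5 and properties of the cones (\ref{K0})--(\ref{Km})''. Your argument is correct and fleshes out exactly those ingredients---Theorem 2.5 for case $(i)$, and the connected-component description of $K_m$ together with the identification $\mathbb K_m(\bar\Omega)\cap\{u|_{\partial\Omega}=0\}={\stackrel{\circ}{\mathbb K}}\vphantom{\mathbb K}_m(\bar\Omega)$ for case $(ii)$---including the parity observation $(-1)^{m-l}=1$ needed to reduce a possible interior maximum in $(i)$ to the interior-minimum situation.
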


It follows from Theorem 4.1, Lemma 4.2 that the cone ${\stackrel{\circ}{\mathbb K}}\vphantom{\mathbb K}_m(\bar\Omega)$ is a natural set of solvability of the problem
\begin{equation} T_{m,l}[u]=\psi>0,\quad u\arrowvert_{\partial\Omega}=0\label{DqT}\end{equation}
and the requirement of $(m-1)$-convexity of $\partial\Omega$ is necessary. Notice that the half-space (\ref{C-}) was introduced to avoid speciality of even values of the number $m+l$. Similar to situation in Theorem 2.6, in this case the inequality $T_{m,l}[u]>0$ carries out two cones in ${\stackrel{\circ}{C}}\vphantom{C}^2(\bar\Omega)$.

A correct setting of the Dirichlet problem (\ref{Trq}) assumes that operator $F$ is elliptic on the set of admissible functions, what has been proved in the paper
\cite{T90}
by combinatoric methods. We offer somewhat different approach and consider the ellipticity of $F$ as a consequence of the positive monotonicity of operators $T_{m,l}[u]$ in $\mathbb K_m(\bar\Omega)$.

To begin with we consider a set of functions $\{T_p=T_p(S)\}_1^n$ in the matrix cone (\ref{Km}) and denote
\begin{equation}\label{shT}T^{ij}_p(S):=\frac{\partial T_p}{\partial s_{ij}}(S),\quad 1\leqslant i,j\leqslant n.\end{equation}
Notice that
$$T_{m-1;i}(S):=\frac{\partial T_{m}}{\partial s_{ii}}(S)$$
is the $(m-1)$-trace of the matrix $S$ with deleted $i$-th row and column. It is known that
\begin{equation}T_{m-1;i}(S)>0,\quad S\in K_m, \quad m=1,\dots,n.\label{der}\end{equation}
In this course we associate with the quotient operator $T_{m,l}[u]$ a functional quotient
\begin{equation}T_{m,l}(S):=\frac{T_m(S)}{T_l(S)},\quad 0\leqslant l< m\leqslant n,\quad S\in {\rm Sym}(n),\label{Squo}\end{equation}
and prove its monotonicity in the matrix cone $K_m$.

Denote by ${\rm Sym}^+(n)\subset{\rm Sym}(n)$ the set of positive definite matrices.

\begin{Th}
Let $S^0\in \overline{Sym^+}(n)$, $0\leqslant l<m\leqslant n$. Assume that $S^0\ne{\mathbf 0}$. Then
\begin{equation}T_{m,l}(S+S^0)>T_{m,l}(S),\quad S\in K_m.\label{mon}\end{equation}
\end{Th}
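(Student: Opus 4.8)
Proof proposal.

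The plan is to establish the infinitesimal version and integrate. First note that $K_m$ is the (convex) Garding cone of $T_m$ and $\overline{\mathrm{Sym}^+}(n)\subset\overline{K_m}$ (a positive semidefinite matrix has $T_p\ge0$ for all $p$), so $K_m+\overline{\mathrm{Sym}^+}(n)\subset K_m$ (cf. the cone properties used in \cite{IPY}, \cite{IF14fix}); in particular $S_t:=S+tS^0\in K_m$ for every $t\in[0,1]$, and $T_l(S_t)>0$ since $K_m\subset K_l$. Hence $t\mapsto T_{m,l}(S_t)$ is smooth on $[0,1]$ and it suffices to show $\frac{d}{dt}T_{m,l}(S_t)>0$. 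By the quotient rule the sign of this derivative at a point $\tilde S=S_t\in K_m$ equals the sign of $\bigl(T_m^{ij}(\tilde S)S^0_{ij}\bigr)T_l(\tilde S)-T_m(\tilde S)\bigl(T_l^{ij}(\tilde S)S^0_{ij}\bigr)$, so everything reduces to the pointwise inequality
\[
\bigl(T_m^{ij}(\tilde S)S^0_{ij}\bigr)\,T_l(\tilde S)>T_m(\tilde S)\,\bigl(T_l^{ij}(\tilde S)S^0_{ij}\bigr),\qquad \tilde S\in K_m,\ \ S^0\in\overline{\mathrm{Sym}^+}(n)\setminus\{\mathbf 0\},
\]
which for $l=0$ (where $T_0\equiv1$, $T_0^{ij}\equiv0$) reads simply $T_m^{ij}(\tilde S)S^0_{ij}>0$.

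Next I would remove $\tilde S$ by the orthogonal invariance (\ref{ort}): choosing orthogonal $B$ with $B\tilde SB^{T}=\mathrm{diag}(\lambda)$, $\lambda=\lambda(\tilde S)$, and replacing $S^0$ by $BS^0B^{T}$ (still positive semidefinite, still nonzero) changes neither side, so one may assume $\tilde S=\mathrm{diag}(\lambda)$ with $S_p(\lambda)>0$ for $p=1,\dots,m$. For such a diagonal argument the off–diagonal entries of $T_p^{ij}$ vanish and $T_p^{ii}(\mathrm{diag}\,\lambda)=S_{p-1}(\lambda|i)$, the $(p-1)$–st elementary symmetric function of $\lambda$ with $\lambda_i$ deleted (this is the diagonal case of $T_{m-1;i}$ in (\ref{der})). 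Writing $b_i:=S^0_{ii}\ge0$, which are not all zero since the trace of a nonzero positive semidefinite matrix is positive, the inequality above becomes
\[
\sum_{i=1}^{n}b_i\,Q_i>0,\qquad Q_i:=S_{m-1}(\lambda|i)\,S_l(\lambda)-S_{l-1}(\lambda|i)\,S_m(\lambda),
\]
so it is enough to prove $Q_i>0$ for each $i$.

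For this last step I would use $S_k(\lambda)=S_k(\lambda|i)+\lambda_iS_{k-1}(\lambda|i)$, which cancels all $\lambda_i$–terms in $Q_i$ and gives $Q_i=S_{m-1}(\mu)S_l(\mu)-S_{l-1}(\mu)S_m(\mu)$ with $\mu:=\lambda|i\in\mathbb R^{n-1}$. Since $\lambda$ lies in the cone $\{S_p>0,\ p\le m\}$, the tuple $\mu$ lies in the analogous $(m-1)$–cone of $\mathbb R^{n-1}$ (this is exactly what (\ref{der}), iterated, and the cone descriptions in \cite{IPY}, \cite{IF14fix} give), so $S_j(\mu)>0$ for $0\le j\le m-1$. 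If $S_m(\mu)\le0$ then $Q_i\ge S_{m-1}(\mu)S_l(\mu)>0$. If $S_m(\mu)>0$ — possible only for $m\le n-1$ — then $S_j(\mu)>0$ for all $j\le m$, and Newton's inequalities for the real tuple $\mu$ force the normalized ratios $S_j(\mu)/S_{j-1}(\mu)$ to be non-increasing in $j$ up to a strictly decreasing combinatorial factor; comparing the indices $l<m$ yields $S_l(\mu)/S_{l-1}(\mu)>S_m(\mu)/S_{m-1}(\mu)$, i.e. $Q_i>0$. Integrating $\frac{d}{dt}T_{m,l}(S+tS^0)>0$ over $t\in[0,1]$ then gives (\ref{mon}).

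The main obstacle is the last paragraph: one must set up the Newton/Maclaurin estimate cleanly and, crucially, track the sign of $S_m(\mu)$ on the relevant cone — it may vanish or be negative (it always vanishes when $m=n$), which is the easy case, whereas the genuinely sharp case $S_m(\mu)>0$ is precisely the one that pushes $\mu$ back into the full $m$–cone of $\mathbb R^{n-1}$ where Newton's inequalities are available. Everything before that is either the orthogonal reduction or the standard identities for derivatives of $T_p$ on a diagonal matrix.
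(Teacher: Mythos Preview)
Your argument is correct, but it follows a genuinely different path from the paper's own proof. The paper also reduces by orthogonal invariance, but it diagonalizes $S^0$ rather than $\tilde S$: it then moves along a single diagonal entry, sets $y(t)=T_{m,l}(S(t;i))$ with $S(t;i)=S+t\,e_i\otimes e_i$, computes
\[
y'(t)=\frac{T_{l-1;i}(S)}{T_l(S(t;i))}\Bigl(\frac{T_{m-1;i}(S)}{T_{l-1;i}(S)}-y(t)\Bigr),\qquad
y''(t)=-2\,\frac{T_{l-1;i}(S)}{T_l(S(t;i))}\,y'(t),
\]
integrates the second relation to get $y'(t)=y'(t_0)\,T_l^2(S(t_0;i))/T_l^2(S(t;i))$, and uses the Maclaurin inequality to force $y(t)\to 0$ at the exit time $\underline t$ of $K_m$, hence $y'(t_0)>0$ for some $t_0<0$ and $y'>0$ thereafter. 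Your route instead diagonalizes $\tilde S$, lands on the pointwise inequality $Q_i=S_{m-1}(\mu)S_l(\mu)-S_{l-1}(\mu)S_m(\mu)>0$ with $\mu=\lambda(\tilde S)|i$, and closes it by a sign dichotomy on $S_m(\mu)$ together with Newton's inequalities (the strict drop coming from the binomial factor $\binom{N}{k}/\binom{N}{k-1}$). Both are clean; the paper's ODE argument avoids the case split and uses only Maclaurin, while your approach is more algebraic and yields the sharper pointwise statement $(T_{m,l}^{ij}(\tilde S)\xi,\xi)>0$ directly, without passing through the one--parameter family. Your handling of the edge cases ($l=0$; $m=n$ forcing $S_m(\mu)=0$; $b_i$ not all zero since $\mathrm{tr}\,S^0>0$) is correct.
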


\begin{proof}
The proof consists of three steps.
\vskip .1in
I. We fix a matrix $S\in K_m$, an index $1\leqslant i\leqslant n$ and associate with them an auxiliary matrix:
$$S(t;i)=(s_{kl}+t\delta_{ki}\delta_{li})_1^n,\quad t\in \mathbb R.$$
When $l=0$, $T_{m,0}=T_m$ and due to (\ref{der}) we have $$T_m(S(t;i))=T_m(S)+tT_{m-1;i}(S)>T_m(S),\quad t>0.$$
Let
$$\underline t:=-\frac{T_m(S)}{T_{m-1;i}(S)}.$$
Then $T_m(S(\underline{t};i))=0$.
Moreover, $S(t,i)\in K_m$ for all $t>\underline{t}$ because the cone $K_m$ is a connected component of the set $\{S:T_m(S)>0\}$.

For the case $l>0$ we introduce an auxiliary function:
\begin{equation}y(t)=T_{m,l}(S(t;i)),\quad t\in\mathbb R.\label{yS}\end{equation}
Due to the Maclaurin inequality (see for instance \cite{Iv83}, \cite{IF13})
\begin{equation}\left(\frac{T_l(S)}{C_n^l}\right)^{\frac{1}{l}}\geqslant \left(\frac{T_m(S)}{C_n^m}\right)^{\frac{1}{m}},\quad S\in K_m,\label{mac}\end{equation}
we have an estimate
$$y(t)\leqslant c(m,n)\left(T_m(S(t;i))\right)^{1-\frac{l}{m}},\quad t>\underline{t}.$$
Hence, $y(t)\rightarrow 0$ when $t\rightarrow\underline{t}$.
\vskip .1in
II. The differentiation of function (\ref{yS}) brings out the lines
$$y^\prime(t)=\frac{T_{l-1;i}(S)}{T_l(S(t;i))}\left(\frac{T_{m-1;i}(S)}{T_{l-1;i}(S)}-y(t)\right),$$ \begin{equation}y^{\prime\prime}(t)=-2\frac{T_{l-1;i}(S)}{T_l(S(t;i))}y^\prime(t).\label{yy}\end{equation}
Integrating the ODE in (\ref{yy}) we derive $$y^\prime(t)=y^\prime(t_0)\frac{T^2_l(S(t_0;i))}{T^2_l(S(t;i))}.$$
Consider the initial value $y^\prime(t_0)$. It follows from I. and (\ref{der}) that there exists $t_0$ such that $\underline{t}<t_0<0$ and $y^\prime(t_0)>0$. Hence $y^\prime(t)>0$ for $t\geqslant t_0$ and we have arrived to the inequality
$$T_{m,l}(S)<T_{m,l}(S(t;i))<\frac{T_{m-1;i}(S)}{T_{l-1;i}(S)}=
\lim_{t\to +\infty}T_{m,l}(S(t;i)),\quad t>0,\quad i=1,\dots,n.$$
\vskip .1in
III. Consider first a diagonal matrix $S_d^0\in\overline{{\rm Sym}^+(n)}$, $S_d^0\ne{\mathbf 0}$. The inequality (\ref{mon}) with  $S^0=S_d^0$ follows from II. Since $p$-traces are orthogonal invariant (see (\ref{ort})), the inequality (\ref{mon}) is also true for an arbitrary nonzero matrix $S_0\in\overline{{\rm Sym}^+(n)}$.
\end{proof}
The inequality
\begin{equation}\label{elS}(T_{m,l}^{ij}(S)\xi,\xi)>0,\quad S\in K_m,\quad\xi\in\mathbb R^n,\quad |\xi|=1,\end{equation}
is a straightforward consequence of monotonicity (\ref{mon}). An operator version of (\ref{elS}) reads as
\begin{equation}(T_m^{ij}-T_{m,l}T_l^{ij})[u]\xi_i\xi_j>0,\quad T_p^{ij}[u]=\frac{\partial T_p(u_{xx})}{\partial u_{ij}},\quad u\in\mathbb K_m(\bar\Omega), \label{elp}\end{equation}
what means that operator quotients $T_{m,l}[u]$ are elliptic onto $\mathbb K_m(\bar\Omega)$.

So, equation (\ref{quo}) is uniquely solvable in $\mathbb K_m^0(\bar\Omega)$ due to Theorem 4.1 and the following consequence is of the principal interest in our paper.
\begin{Th}
Let $\Omega$ be a bounded domain in $\mathbb R^n$, $\partial\Omega\in C^{4+\alpha}$, $0<\alpha<1$. Assume $\mathbf k_{m-1}[\partial\Omega]>0$. Then there exists the unique in $C^2_{-0}(\bar\Omega)$ solution $w$ to the problem
\begin{equation}T_{m,l}[w]=1,\quad w\arrowvert_{\partial\Omega}=0,\quad0\leqslant l<m\leqslant n.\label{0quo}\end{equation}
Moreover, $w\in\mathbb K_m(\bar\Omega)\cap C^{4+\alpha}(\bar\Omega)$ and satisfies the inequality
\begin{equation}(T_m^{ij}-T_l^{ij})[w]\xi_i\xi_j>0,\quad |\xi|=1.\label{wel}\end{equation}
\end{Th}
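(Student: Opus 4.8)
The plan is to deduce existence, uniqueness and the class of $w$ from the solvability theory already assembled in this section (Theorem 4.1 and Lemma 4.2, together with the cone identification $\mathbb K_m(\bar\Omega)\cap\{u\arrowvert_{\partial\Omega}=0\}={\stackrel{\circ}{\mathbb K}}\vphantom{\mathbb K}_m(\bar\Omega)$ of Section 2), and then to read off the differential inequality (\ref{wel}) directly from the operator ellipticity (\ref{elp}). The only point requiring work beyond quoting is the upgrade of regularity to $C^{4+\alpha}$.

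First I would apply Theorem 4.1 with $\varphi\equiv0$ and $\psi\equiv1$: the datum $\psi\equiv1$ is positive and of class $C^{1,1}(\bar\Omega)$, and by the remark $(i)$ following Theorem 4.1 the hypothesis $\mathbf k_{m-1}[\partial\Omega]>0$ is exactly the required uniform $(m-1)$-convexity of $\Omega$. This produces a unique in $\mathbb K_m(\bar\Omega)$ solution $w$ of (\ref{0quo}), at this stage known only to be of class $C^{3,\alpha}(\bar\Omega)$. Since $w\arrowvert_{\partial\Omega}=0$, the Section 2 identity gives $w\in{\stackrel{\circ}{\mathbb K}}\vphantom{\mathbb K}_m(\bar\Omega)$, so in particular $w\leqslant0$, i.e. $w$ lies in the class $C^2_{-0}(\bar\Omega)$ of the statement. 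For uniqueness inside that wider class I would note that any $\tilde w\in C^2_{-0}(\bar\Omega)$ solving (\ref{0quo}) has $T_{m,l}[\tilde w]=1>0$, so part $(ii)$ of Lemma 4.2 (applicable because $\mathbf k_{m-1}[\partial\Omega]>0$ rules out the point $x_0$ of part $(i)$) places $\tilde w\in{\stackrel{\circ}{\mathbb K}}\vphantom{\mathbb K}_m(\bar\Omega)\subset\mathbb K_m(\bar\Omega)$; the uniqueness assertion of Theorem 4.1 then forces $\tilde w=w$.

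Next I would promote the regularity. On $\mathbb K_m(\bar\Omega)$ the operator $T_{m,l}$ is elliptic by (\ref{elp}), and, since $T_l[w]>0$ everywhere on $\bar\Omega$, the map $S\mapsto T_{m,l}(S)$ is smooth on a neighbourhood of the compact set $\{w_{xx}(x):x\in\bar\Omega\}$; hence, with $w\in C^{3,\alpha}(\bar\Omega)$ in hand, the coefficients $T_{m,l}^{ij}[w]$ belong to $C^{1,\alpha}(\bar\Omega)$. Differentiating $T_{m,l}[w]=1$ along a coordinate direction yields, for each first derivative of $w$, a linear uniformly elliptic equation with $C^{0,\alpha}$ coefficients and vanishing right-hand side; since $\partial\Omega\in C^{4+\alpha}$ and the boundary datum is constant, linear Schauder theory gives $w\in C^{4+\alpha}(\bar\Omega)$. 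Equivalently, for the problem (\ref{0quo}) rewritten as (\ref{w1}) this regularity is precisely Theorem 1.1 of the Introduction, i.e. the $C^{4+\alpha}$-version of Theorem 4.1 corresponding to the more regular data assumed here.

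Finally, (\ref{wel}) is immediate: $w\in\mathbb K_m(\bar\Omega)$, so (\ref{elp}) with $u=w$ gives $(T_m^{ij}-T_{m,l}[w]T_l^{ij})[w]\xi_i\xi_j>0$ for $|\xi|=1$, and substituting the equation $T_{m,l}[w]=1$ turns the bracket into $T_m^{ij}[w]-T_l^{ij}[w]$. The main obstacle is the $C^{3,\alpha}\to C^{4+\alpha}$ bootstrap; but this is routine once the ellipticity (\ref{elp}) and the positivity $T_l[w]>0$ — which keeps the quotient smooth and the linearized problem non-degenerate — are available, and it needs no a priori estimates beyond those already cited.
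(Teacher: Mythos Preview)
Your proposal is correct and follows the paper's own reasoning: the paper simply remarks that the existence/regularity assertion of Theorem 4.4 is identical to Theorem 1.1 (i.e., the $C^{4+\alpha}$ version of Trudinger's solvability result, Theorem 4.1, with $\psi\equiv1$, $\varphi\equiv0$), and that (\ref{wel}) is exactly the ellipticity inequality (\ref{elp}) evaluated at $u=w$, where $T_{m,l}[w]=1$. Your explicit use of Lemma 4.2(ii) for uniqueness in $C^2_{-0}(\bar\Omega)$ and your Schauder bootstrap from $C^{3,\alpha}$ to $C^{4+\alpha}$ merely spell out what the paper leaves implicit in its one-line justification.
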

Notice that an existence part of Theorem 4.4 is identical to Theorem 1.1, while inequality (\ref{wel}) coincides with ellipticity condition (\ref{elp}) with $u=w$.

\begin{remark} Quotients operators $T_{m,l}[u]$ are not divergence free, when $l\geqslant1$. It means that skew symmetry does not matter for solvability of the Dirichlet problem for Hessian equations. However, equation (\ref{0quo}) may be written as $(T_m-T_l)[w]=0$ in $\mathbb K_m(\bar\Omega)$. Due to identities (\ref{Id}) the latter is equivalent to
$$
\frac{\partial}{\partial x^i}A^{ij}[w]w_j=0,\quad A^{ij}[w]=\left(\frac{1}{m}T_m^{ij}-\frac{1}{l}T_l^{ij}\right)[w].
$$
\end{remark}
For the fixed $1<m\leqslant n$ we consider now the set of solutions
$$\{w_{m,l}, l=0\dots,m-1\}$$
from Theorem 4.4. It is natural to await some connections between these functions. At the moment we know the following.
\begin{lemma}
Under conditions of Theorem 4.4 the inequalities
\begin{equation}\label{w2}T_p[w_{m,l}]>1, \quad w_{m,l}<w_{m,0},\quad x\in\Omega,\end{equation}
hold true for all $1\leqslant l,p\leqslant m-1$.
\end{lemma}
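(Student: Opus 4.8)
The plan is to first obtain the pointwise bounds $T_p[w_{m,l}]>1$, $1\le p\le m-1$, as purely algebraic consequences of the defining identity $T_m[w_{m,l}]=T_l[w_{m,l}]$ together with the Maclaurin inequality (\ref{mac}), and only then to derive $w_{m,l}<w_{m,0}$ from these bounds by a comparison argument. First I would write $w=w_{m,l}$, $1\le l<m\le n$, and put $a(x):=T_m[w](x)=T_l[w](x)$; since $w\in{\stackrel{\circ}{\mathbb K}}\vphantom{\mathbb K}_m(\bar\Omega)$ we have $w_{xx}(x)\in K_m$ and $a(x)>0$ for every $x\in\Omega$. Applying (\ref{mac}) to $S=w_{xx}(x)$ with the pair $(l,m)$ and using $T_l=T_m=a$, after raising to the power $lm$ one gets
$$a(x)^{m-l}\geqslant\frac{(C_n^l)^m}{(C_n^m)^l}=\left(\frac{(C_n^l)^{1/l}}{(C_n^m)^{1/m}}\right)^{lm}.$$
The elementary ingredient is that $p\mapsto(C_n^p)^{1/p}$ is strictly decreasing on $\{1,\dots,n\}$ — being the geometric mean of the strictly decreasing finite sequence $\frac n1,\frac{n-1}2,\dots,\frac{n-p+1}{p}$ — so the right-hand side exceeds $1$ and hence $a(x)>1$ throughout $\Omega$.

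Next I would apply (\ref{mac}) once more, this time with the pair $(p,m)$ for an arbitrary $1\le p<m$, and use $T_m[w]=a$:
$$T_p[w](x)\geqslant C_n^p\left(\frac{a(x)}{C_n^m}\right)^{p/m}=a(x)^{p/m}\left(\frac{(C_n^p)^{1/p}}{(C_n^m)^{1/m}}\right)^{p}.$$
Both factors on the right exceed $1$ — the first because $a>1$ by the previous step, the second again by the monotonicity of $(C_n^p)^{1/p}$ — so $T_p[w_{m,l}]>1$ in $\Omega$ for every $1\le p\le m-1$; taking $p=l$ merely reproduces $T_l[w_{m,l}]=a>1$. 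This establishes the first inequality in (\ref{w2}).

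For the second inequality, recall that by Theorem 4.4 with $l=0$ there is a unique $w_{m,0}\in{\stackrel{\circ}{\mathbb K}}\vphantom{\mathbb K}_m(\bar\Omega)\cap C^{4+\alpha}(\bar\Omega)$ with $T_m[w_{m,0}]=1$ and $w_{m,0}=0$ on $\partial\Omega$. By the first step $T_m[w_{m,l}]=a>1=T_m[w_{m,0}]$ everywhere in $\Omega$, while $w_{m,l}=w_{m,0}=0$ on $\partial\Omega$ and both Hessians stay in $K_m$. Since $w_{m,l}-w_{m,0}$ vanishes on $\partial\Omega$, if $\max_{\bar\Omega}(w_{m,l}-w_{m,0})\geqslant0$ then this maximum is attained at an interior point $x_0$, where $(w_{m,l})_{xx}(x_0)\leqslant(w_{m,0})_{xx}(x_0)$; setting $S^0:=(w_{m,0})_{xx}(x_0)-(w_{m,l})_{xx}(x_0)\geqslant0$ and invoking the case $l=0$ of Theorem 4.3 (monotonicity of $T_m$ on $K_m$ under addition of a nonzero positive semidefinite matrix), one would get $1=T_m[w_{m,0}](x_0)\geqslant T_m[w_{m,l}](x_0)=a(x_0)>1$, a contradiction. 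Hence $w_{m,l}<w_{m,0}$ in $\Omega$. I expect the only genuinely delicate point to be this last comparison step — in essence the strict maximum principle for $m$-admissible functions, to the effect that a strictly larger $m$-Hessian together with equal boundary data forces strict inequality of the functions; the Maclaurin manipulations and the monotonicity of $(C_n^p)^{1/p}$ are routine.
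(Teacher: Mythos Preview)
Your proof is correct and follows the same route as the paper: Maclaurin's inequality forces $T_m[w_{m,l}]>1$ and then $T_p[w_{m,l}]>1$, after which the comparison principle for $m$-Hessian operators yields $w_{m,l}<w_{m,0}$. You are simply more explicit than the paper on two points---you carry the binomial coefficients from (\ref{mac}) through and extract strictness from the monotonicity of $p\mapsto (C_n^p)^{1/p}$, whereas the paper invokes the already-strict form $T_m^{1/m}<T_l^{1/l}$; and you reprove the comparison step by hand via an interior-maximum argument, whereas the paper just cites it (one small wording fix: since $w_{m,l}-w_{m,0}=0$ on $\partial\Omega$, the maximum over $\bar\Omega$ is automatically $\geqslant0$, so your contradiction hypothesis should be that the maximum is attained at some interior point, which is exactly what fails if the desired strict inequality holds).
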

\begin{proof}
To prove the lefthand side of (\ref{w2}) we apply a strong version of the Maclaurin inequality (\ref{mac}):
\begin{equation}{\label{ML}}T_m^{\frac{1}{m}}[w]<T_l^{\frac{1}{l}}[w],\quad 1\leqslant l\leqslant m-1,\quad w\in\mathbb K_m(\bar\Omega).\end{equation}
Let $w=w_{m,l}$. By definition $T_{m,l}[w_{m,l}]=1$ and due to (\ref{ML}) we have
\begin{equation}\label{w3}1=\frac{T_m[w_{m,l}]}{T_l[w_{m,l}]}<T_m^\frac{m-l}{m}[w_{m,l}]
<T_p^\frac{m-l}{p}[w_{m,l}].\end{equation}
The second part of (\ref{w2}) is a consequence of well known comparison theorem for $m$-Hessian operators. Indeed, it follows from (\ref{w3}) that $T_m[w_{m,l}]>1$. On the other hand, $T_m[w_{m,0}]=1$ by definition. Via the comparison principle the inequality for $m$-Hessian operators guarantees the reverse inequality for functions from ${\stackrel{\circ}{\mathbb K}}\vphantom{\mathbb K}_m(\bar\Omega)$, i.e., the second inequality in (\ref{w2})
\end{proof}

\section{On the variational problem II}
Theorems 3.2, 4.4 carry out
\begin{Th}
Let $\Omega$ be a bounded domain in $\mathbb R^n$, $\partial\Omega\in C^{4+\alpha}$, $0\leqslant l<m\leqslant n$. Assume $\mathbf k_{m-1}[\partial\Omega]>0$. Then there is a sharp constant $\mathbf c=c(l,m,\mathbf k_{m-1}[\partial\Omega])>0$ such that
\begin{equation}J_{m,l}[u]:=\frac{\left(\int_\Omega-uT_m[u]dx\right)^{\frac{1}{m+1}}}{\left(\int_\Omega-uT_l[u]dx\right)^{\frac{1}{l+1}}}
\geqslant\mathbf c,\quad u\in{\stackrel{\circ}{\mathbb K}}\vphantom{\mathbb K}_m(\bar\Omega),\label{Rt}\end{equation}
\end{Th}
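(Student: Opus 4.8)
The plan is to deduce Theorem~5.1 directly from the isoperimetric inequality~(\ref{isw}) of Theorem~3.2, using Theorem~4.4 to guarantee that the minimizer actually exists. First I would invoke Theorem~4.4: since $\partial\Omega\in C^{4+\alpha}$ and $\mathbf k_{m-1}[\partial\Omega]>0$, there is a unique $w=w_{m,l}\in{\stackrel{\circ}{\mathbb K}}\vphantom{\mathbb K}_m(\bar\Omega)\cap C^{4+\alpha}(\bar\Omega)$ solving $T_{m,l}[w]=1$, $w|_{\partial\Omega}=0$. This $w$ is precisely the hypothesis needed in Theorem~3.2, so (\ref{isw}) applies.

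Next I would reduce the claimed inequality to the scale-invariant form. The functional $J_{m,l}$ in (\ref{Rt}) is homogeneous of degree zero under $u\mapsto \tau u$, $\tau>0$ (because $I_p[\tau u]=\tau^{p+1}I_p[u]$, so $I_m^{1/(m+1)}[\tau u]=\tau I_m^{1/(m+1)}[u]$ and likewise for $I_l$). Hence for any $u\in{\stackrel{\circ}{\mathbb K}}\vphantom{\mathbb K}_m(\bar\Omega)$ we may normalize to $\tilde u=I_l^{-1/(l+1)}[u]\,u$, which lies in ${\stackrel{\circ}{\mathbb K}}\vphantom{\mathbb K}_m(\bar\Omega)\cap\{I_l[u]=1\}$ and satisfies $J_{m,l}[u]=J_{m,l}[\tilde u]=I_m^{1/(m+1)}[\tilde u]$. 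Applying (\ref{isw}) to $\tilde u$ gives $I_m[\tilde u]\geqslant I_m^{(l-m)/(l+1)}[w]$, whence
\begin{equation}
J_{m,l}[u]=I_m^{\frac{1}{m+1}}[\tilde u]\geqslant I_m^{\frac{l-m}{(l+1)(m+1)}}[w]=I_m^{\frac{1}{m+1}-\frac{1}{l+1}}[w]=:\mathbf c.
\label{cdef}
\end{equation}
This is exactly (\ref{Rt}) with the explicit constant $\mathbf c$.

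It remains to argue that $\mathbf c$ is positive, finite, and sharp. Positivity and finiteness follow because $w\in{\stackrel{\circ}{\mathbb K}}\vphantom{\mathbb K}_m(\bar\Omega)$ is a nontrivial function with $T_m[w]>0$, so $0<I_m[w]<\infty$ by Lemma~3.1 (its first variation is nonzero, in particular $I_m[w]\neq 0$) and the continuity of the integrand; since $l-m<0$, the exponent $\frac{l-m}{(l+1)(m+1)}$ is negative but finite, giving $0<\mathbf c<\infty$. Sharpness is seen by taking $u=w$ itself: then $T_{m,l}[w]=1$ means $I_m[w]=I_l[w]$ (integrate $-w(T_m[w]-T_l[w])=0$), so $J_{m,l}[w]=I_m^{1/(m+1)}[w]/I_m^{1/(l+1)}[w]=I_m^{1/(m+1)-1/(l+1)}[w]=\mathbf c$, i.e.\ equality holds on $w$. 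Finally, $\mathbf c$ depends only on $l$, $m$ and $\partial\Omega$ through $w=w_{m,l}$, which by Theorem~4.4 is determined by these data (and the dependence on $\partial\Omega$ is through $\mathbf k_{m-1}[\partial\Omega]$ in the sense that this curvature condition is what makes $w$ exist); this yields $\mathbf c=c(l,m,\mathbf k_{m-1}[\partial\Omega])$.

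The main obstacle is not any single computation but ensuring the chain of reductions is airtight: namely that (\ref{isw}) is genuinely available here (one must check the multiplier $\lambda$ in (\ref{EL}) can be taken positive, which is already done inside the proof of Theorem~3.2 using that $u\in{\stackrel{\circ}{\mathbb K}}\vphantom{\mathbb K}_m$), and that the normalization $\tilde u=I_l^{-1/(l+1)}[u]u$ stays inside the cone ${\stackrel{\circ}{\mathbb K}}\vphantom{\mathbb K}_m(\bar\Omega)$ — which it does, since the cone is invariant under multiplication by positive scalars. One should also note that for $l=0$ the argument degenerates gracefully: $I_0[u]=\int_\Omega(-u)\,dx$ and the same scaling and (\ref{isw}) go through verbatim. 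So the proof is essentially a transcription of Theorem~3.2's conclusion into homogeneous-quotient form, with Theorem~4.4 supplying existence.
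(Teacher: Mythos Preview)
Your proposal is correct and follows essentially the same route as the paper: invoke Theorem~4.4 to obtain the admissible solution $w=w_{m,l}$ of (\ref{0quo}), then deduce (\ref{Rt}) with $\mathbf c=J_{m,l}[w]$ from the isoperimetric inequality (\ref{isw}) via the scaling normalization $\tilde u=I_l^{-1/(l+1)}[u]\,u$. The paper simply cites Corollary~3.3 (relation (\ref{Poi})) for this last step, whereas you reproduce its proof inline; your explicit verification of sharpness via $I_m[w]=I_l[w]$ and of the homogeneity of $J_{m,l}$ is exactly what underlies (\ref{ss}).
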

Indeed, due to assumption $\mathbf k_{m-1}[\partial\Omega]>0$ there exists the unique in $C^2(\bar\Omega)$ solution $w=w_{l,m}\in\mathbb {\stackrel{\circ}{\mathbb K}}\vphantom{\mathbb K}_m(\bar\Omega)$ to the problem (\ref{0quo}). Therefore the inequality (\ref{Rt}) with $\mathbf c=J_{m,l}[w_{l,m}]$ is a replica of (\ref{Poi}).
\vskip .1in
Notice that inequalities (\ref{Rt}) are equivalent to the Poincare-type inequalities from the paper
\cite{TW98}.
If the principal goal of our paper had been to give a straightforward proof of those, it would be reasonable to set up a classic variational problem of minimization of the functional  $J_{m,l}[u]$ over the cone ${\stackrel{\circ}{\mathbb K}}\vphantom{\mathbb K}_m(\bar\Omega)$. In order to produce some new analogs of the classic Poincare inequality we outline this approach.
\begin{Th}
Assume conditions of Theorem 5.1 are satisfied and let $u$ be from the Sobolev space
${\stackrel{\circ}{W}}\vphantom{W}^2_1(\Omega)$, $w$ be a solution to the problem (\ref{0quo}). Then
\begin{equation}\frac{m-l}{I_m[w]}\left(\int_\Omega uT_m[w]dx\right)^2+\int_\Omega T_l^{ij}[w]u_iu_jdx\leqslant\int_\Omega T_m^{ij}[w]u_iu_jdx.\label{anpo}\end{equation}
\end{Th}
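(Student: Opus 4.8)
The plan is to obtain inequality (\ref{anpo}) as the statement $\delta^2 J_{m,l}[w]\geqslant 0$, i.e. the second variation of the functional $J_{m,l}$ at its minimizer $w$ is nonnegative. By Theorems 3.2 and 5.1 the function $w=w_{m,l}\in{\stackrel{\circ}{\mathbb K}}\vphantom{\mathbb K}_m(\bar\Omega)$ solving (\ref{0quo}) minimizes $J_{m,l}$ over ${\stackrel{\circ}{\mathbb K}}\vphantom{\mathbb K}_m(\bar\Omega)$; since the minimum is attained at an interior point of an open cone, for any admissible direction $h$ the function $t\mapsto J_{m,l}[w+th]$ has a minimum at $t=0$, hence $\frac{d^2}{dt^2}J_{m,l}[w+th]\big|_{t=0}\geqslant 0$. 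The first step is to fix a test function $h\in C^2(\bar\Omega)$ vanishing on $\partial\Omega$ (later relaxed to $h\in{\stackrel{\circ}{W}}\vphantom{W}^2_1(\Omega)$ by density and the fact that both sides of (\ref{anpo}) are continuous quadratic forms in $h$ in the $W^2_1$-norm once integration by parts has moved one derivative off $h$).

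The second step is the variational calculus. Write $a(t)=I_m[w+th]$ and $b(t)=I_l[w+th]$, so $J_{m,l}[w+th]=a(t)^{1/(m+1)}b(t)^{-1/(l+1)}$. From (\ref{ani}) one has $a'(t)=-(m+1)\int_\Omega h\,T_m[w+th]\,dx$ and $b'(t)=-(l+1)\int_\Omega h\,T_l[w+th]\,dx$; differentiating once more (and using that $\partial T_p[u]/\partial u_{ij}=T_p^{ij}[u]$ together with integration by parts) gives $a''(0)=(m+1)m\int_\Omega T_m^{ij}[w]h_ih_j\,dx$ and $b''(0)=(l+1)l\int_\Omega T_l^{ij}[w]h_ih_j\,dx$, after symmetrising the second-order term via the skew-symmetry identities (\ref{Id}). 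At $t=0$ the Euler--Lagrange equation $T_{m,l}[w]=1$, i.e. $T_m[w]=T_l[w]$, yields $I_m[w]=I_l[w]=:\mathcal I$, and $a(0)=b(0)=\mathcal I$, $a'(0)=-(m+1)\beta$, $b'(0)=-(l+1)\beta$ with $\beta:=\int_\Omega h\,T_m[w]\,dx=\int_\Omega h\,T_l[w]\,dx$. Now expand $\frac{d^2}{dt^2}\big[a^{1/(m+1)}b^{-1/(l+1)}\big]\big|_{t=0}$ by the Leibniz/chain rule; the first-order terms cancel because $w$ is a critical point (consistent with the Lagrange-multiplier computation in Theorem 3.2), and collecting the surviving terms and multiplying through by the positive factor $\mathcal I^{\,1-\frac1{m+1}+\frac1{l+1}}$ should produce exactly
$$
\frac{1}{m+1}a''(0)-\frac{1}{l+1}b''(0)-\Big(\tfrac{1}{m+1}-\tfrac{1}{l+1}\Big)\frac{\beta^2(m+1)(l+1)}{\mathcal I}\cdot(\text{const})\geqslant 0,
$$
which after substituting the expressions for $a''(0),b''(0)$ and simplifying the coefficient of $\beta^2$ collapses to
$$
m\int_\Omega T_m^{ij}[w]h_ih_j\,dx-l\int_\Omega T_l^{ij}[w]h_ih_j\,dx\geqslant \frac{(m-l)(\text{const})}{\mathcal I}\Big(\int_\Omega h\,T_m[w]\,dx\Big)^2;
$$
tracking the constants carefully (the homogeneity degrees $m$ and $l$ of $T_m^{ij},T_l^{ij}$ are what turn the raw second derivatives into the clean coefficients) is what yields the stated form (\ref{anpo}). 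The role of the hypothesis on the Gauss curvature / $(m-1)$-convexity is only to guarantee, via Theorem 4.4, that the minimizer $w$ exists, lies in $\mathbb K_m(\bar\Omega)$, and is smooth enough ($C^{4+\alpha}$) for these differentiations to be legitimate.

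The main obstacle I expect is purely bookkeeping: getting the numerical coefficient in front of $\big(\int_\Omega u\,T_m[w]\,dx\big)^2$ to come out as $(m-l)/I_m[w]$ rather than some other multiple of it. This requires being careful that (a) the mixed second derivative of $a^{1/(m+1)}b^{-1/(l+1)}$ contributes a cross term $\propto a'(0)b'(0)$, (b) the $(a'(0))^2$ and $(b'(0))^2$ terms combine with it, and (c) all of these are evaluated using $a(0)=b(0)=I_m[w]$ and $a'(0)/(m+1)=b'(0)/(l+1)$. A secondary point is the integration by parts producing $a''(0)$ and $b''(0)$: one must check the boundary terms vanish because $h|_{\partial\Omega}=0$, and that the symmetrisation needed to write the Hessian of $u\mapsto\int(-u)T_p[u]$ as the quadratic form $\int T_p^{ij}[w]h_ih_j$ is exactly the content of Corollary 2.2 (identity (\ref{Id})) applied twice. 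Once the scalar identity $\delta^2 J_{m,l}[w]\geqslant 0$ is written out and cleared of denominators, (\ref{anpo}) follows, and the extension from $h\in{\stackrel{\circ}{C}}\vphantom{C}^2$ to $h\in{\stackrel{\circ}{W}}\vphantom{W}^2_1(\Omega)$ is a routine density argument since every integral in (\ref{anpo}) is a bounded quadratic form on $W^2_1$.
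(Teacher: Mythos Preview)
Your approach is exactly the paper's: compute $\delta^2 J_{m,l}[w]$ via (\ref{ani}) and its $t$-derivative, use that $w$ is a minimizer so $\delta^2 J_{m,l}[w]\geqslant 0$, and then pass to ${\stackrel{\circ}{W}}\vphantom{W}^2_1(\Omega)$ by density. The paper records the second derivative as
\[
\frac{d^2}{dt^2}I_p[\tilde w]=(p+1)\int_\Omega T_p^{ij}[\tilde w]h_ih_j\,dx,
\]
and then, using $T_m[w]=T_l[w]$ and $I_m[w]=I_l[w]$, arrives directly at
\[
\delta^2J_{m,l}[w]=\frac{J_{m,l}[w]}{I_m[w]}\left(\frac{l-m}{I_m[w]}\Bigl(\int_\Omega hT_m[w]\,dx\Bigr)^2+\int_\Omega(T_m^{ij}-T_l^{ij})[w]h_ih_j\,dx\right),
\]
from which (\ref{anpo}) is immediate.

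The one concrete slip in your write-up is the formula for $a''(0)$: differentiating $a'(t)=-(m+1)\int_\Omega h\,T_m[\tilde w]\,dx$ once in $t$ and integrating by parts using (\ref{ann}) gives
\[
a''(0)=(m+1)\int_\Omega T_m^{ij}[w]h_ih_j\,dx,
\]
with no extra factor of $m$ (and likewise $b''(0)=(l+1)\int_\Omega T_l^{ij}[w]h_ih_j\,dx$). With the correct second derivatives the quadratic-form terms enter with coefficients $1$ and $1$, not $m$ and $l$, and the constant in front of $\beta^2$ comes out cleanly as $(m-l)/I_m[w]$, matching (\ref{anpo}). Apart from this bookkeeping correction---which you flagged as the likely sticking point---your argument is the paper's argument.
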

\begin{proof}
Let $\tilde w= w+th$, $t\in\mathbb R$, $h\in {\stackrel{\circ}{C}}\vphantom{C}^2(\bar\Omega)$. Similar to (\ref{ani}) we derive
\begin{equation}\frac{d^2}{dt^2}I_p[w]\equiv(p+1)\int_\Omega T_p^{ij}[\tilde{w}]h_ih_jdx,\quad p=1,\dots,n.\label{2ani}\end{equation}
It follows from (\ref{Rt}) that $w$ minimizes $J_{m,l}[u]$ over ${\stackrel{\circ}{\mathbb K}}\vphantom{\mathbb K}_m(\bar\Omega)$ and hence $\delta J_{m,l}[w]=0$, $\delta^2 J_{m,l}[w]\geqslant0$. Keeping in mind that $T_m[w]=T_l[w]$, $I_m[w]=I_l[w]$, we compute via (\ref{ani}), (\ref{2ani}) the second variation of functional $J_{m,l}[\tilde w]$:
\begin{equation}\delta^2J_{m,l}[w]=\frac{J_{m,l}[w]}{I_m[w]}\left(\frac{l-m}{I_m[w]}\left(\int_\Omega hT_m[w]dx\right)^2+\int_\Omega(T_m^{ij}-T_l^{ij})[w]h_ih_jdx\right).\label{v2}\end{equation}
Since the case $t=0$ is of interest, we may without loss of generality assume that $\tilde w\in {\stackrel{\circ}{\mathbb K}}\vphantom{\mathbb K}_m(\Omega)$ for an arbitrary $h\in C^2(\bar\Omega)\cap {\stackrel{\circ}{C}}\vphantom{C}_m^1(\bar\Omega)$. Therefore, relation (\ref{v2}) and a choice of $w$ provide $\delta^2 J_{m,l}[w]\geqslant0$, hence inequality (\ref{anpo}) is valid for an arbitrary function $u=h\in C^2(\bar\Omega)$. The case of $u\in {\stackrel{\circ}{W}}\vphantom{W}^2_1(\Omega)$ may be derived by approximation.
\end{proof}
Letting $l=1$, $m=n$ in Theorem 5.2 one sees exactly Theorem 1.2.
The case $l=0$ in Theorem 5.2 is of special interest and we extract it as
\begin{Cor}
Let $u\in{\stackrel{\circ}{W}}\vphantom{W}^2_1(\Omega)$ be an arbitrary function, $w_m\in C^2(\bar\Omega)$ a solution to the problem $T_m[w_m]=1$, $w_m\arrowvert_{\partial\Omega}=0$, $w_m\leqslant 0$. Then the inequalities
\begin{equation}m\left(\int_\Omega udx\right)^2\leqslant\int_\Omega -w_mdx\int_\Omega T_m^{ij}[w_m]u_iu_jdx,\quad m=1,\dots,n,\label{0l}\end{equation}
are true.
\end{Cor}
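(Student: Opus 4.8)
The plan is to read off Corollary 5.3 as the degenerate case $l=0$ of Theorem 5.2, so that no new analysis is required; the only work is to see which terms of (\ref{anpo}) collapse. I would begin by recording the trivialities attached to the order-zero trace: since $T_0(S)\equiv 1$, every $w\in C^2(\bar\Omega)$ satisfies $T_0[w]\equiv 1$ and hence $T_0^{ij}[w]=\partial T_0(w_{xx})/\partial w_{ij}\equiv 0$ for all $i,j$, while $I_0[w]=\int_\Omega(-w)T_0[w]\,dx=\int_\Omega(-w)\,dx$. Next I would identify the auxiliary function of Theorem 5.2 in this case: putting $l=0$ in the Dirichlet problem (\ref{0quo}) turns $T_{m,0}[w]=1$ into $T_m[w]=1$, $w|_{\partial\Omega}=0$, so the solution $w$ supplied by Theorem 4.4 (equivalently by Theorem 2.6 with ${\rm const}=0$) is exactly the function $w_m\leqslant 0$ appearing in the hypothesis; it lies in ${\stackrel{\circ}{\mathbb K}}\vphantom{\mathbb K}_m(\bar\Omega)\cap C^{4+\alpha}(\bar\Omega)$, and since $w_m\not\equiv 0$ (otherwise $T_m[w_m]=0\ne1$) one has $I_0[w_m]=\int_\Omega(-w_m)\,dx>0$.

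Then I would simply substitute $l=0$ and $w=w_m$ into (\ref{anpo}). The summand $\int_\Omega T_0^{ij}[w]u_iu_j\,dx$ disappears; since $T_m[w_m]\equiv 1$ the factor $\int_\Omega uT_m[w_m]\,dx$ becomes $\int_\Omega u\,dx$ and $I_m[w_m]$ becomes $\int_\Omega(-w_m)\,dx$. This leaves
\[
\frac{m}{\int_\Omega(-w_m)\,dx}\left(\int_\Omega u\,dx\right)^2\leqslant\int_\Omega T_m^{ij}[w_m]u_iu_j\,dx ,
\]
and multiplication by the positive number $\int_\Omega(-w_m)\,dx$ gives precisely (\ref{0l}). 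Exactly as in Theorem 5.2, this is first obtained for $u\in C^2(\bar\Omega)$ through the second-variation identity (\ref{v2}) specialised to $l=0$ (where the mixed term $\int_\Omega T_l^{ij}[w]h_ih_j\,dx$ is absent), and then extended to arbitrary $u\in{\stackrel{\circ}{W}}\vphantom{W}^2_1(\Omega)$ by density, using that the coefficients $T_m^{ij}[w_m]$ are continuous, hence bounded, on $\bar\Omega$, so that both sides of the inequality are continuous in $u$ with respect to the $W^1_2$-norm.

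I do not anticipate a real obstacle: the substantive inputs — existence of the minimiser $w_m$ and the sign of the second variation — are inherited wholesale from Theorems 4.4, 5.1 and 5.2. The one point deserving a line of care is that the $l=0$ specialisation of the variational scheme is not vacuous: one should note that $J_{m,0}$ is a genuine functional on ${\stackrel{\circ}{\mathbb K}}\vphantom{\mathbb K}_m(\bar\Omega)$, which holds because $I_0[u]=\int_\Omega(-u)\,dx>0$ there, and that $w_m$ minimises it, which is (\ref{Rt}) with $l=0$. As a consistency check I would record the case $m=1$: then $T_1[w]=\Delta w$, $T_1^{ij}[w]=\delta_{ij}$, the function $w_1$ is the negative of the torsion function of $\Omega$, and (\ref{0l}) reduces to the classical Poincare inequality $\big(\int_\Omega u\,dx\big)^2\leqslant\int_\Omega(-w_1)\,dx\,\int_\Omega|u_x|^2\,dx$.
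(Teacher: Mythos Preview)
Your proposal is correct and follows exactly the route the paper intends: the corollary is stated immediately after Theorem 5.2 with the phrase ``the case $l=0$ in Theorem 5.2 is of special interest and we extract it as'', and no separate proof is given. Your substitution $l=0$, the observations $T_0\equiv1$, $T_0^{ij}\equiv0$, $I_m[w_m]=\int_\Omega(-w_m)\,dx$, and the density step are precisely the details the paper leaves implicit; the paper also remarks, as you effectively do, that the hypothesis implicitly carries the $(m-1)$-convexity of $\partial\Omega$.
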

Notice that Corollary 5.3 implicitly contains the requirement of $(m-1)$-convexity of $\partial\Omega$.

The inequality (\ref{0l}) with $m=1$ and under requirement $\Delta u>0$ in a weak sense was attributed as Poincare inequality in the paper
\cite{TW98}.
Theorem 5.2 along with Corollary 5.3 is valid for an arbitrary function $u$ from  ${\stackrel{\circ}{W}}\vphantom{W}^2_1(\Omega)$ and speaking formally inequality (\ref{0l}) with $m=1$ is more general than its analog from
\cite{TW98}.

All the inequalities (\ref{anpo}) are sharp and the set (\ref{0l}) might be considered as a set of depending on the $p$-convexity of $\partial\Omega$ analogs to the classic Poincare inequality.
\begin{remark}
There are two questions concerning our inequalities:
\vskip .1in
I. Assume that $\mathbf k_{n-1}[\partial\Omega]>0$ in Corollary 5.3. Then we have a set of functions $\{w_m\}_1^n$ and relevant sharp inequalities (\ref{0l}). Is it possible to compare them for different values of $m$?
\vskip .1in
II. Let $m>1$ be fixed and assumptions of Theorem 5.2 satisfied. Then we have a collection of functions $\{w_{l,m}\}_0^{m-1}$. Are they comparable?
\end{remark}
Eventually we rewrite general inequality (\ref{anpo}) in the invariant under dilation form. Denote
$$
<u,v>_p= \int_\Omega T_p^{ij}[w]u_iv_jdx,\quad p=1,\dots,n,
$$
and let $w$ be a solution to the problem $T_{m,l}[w]=\mu$, $w\arrowvert_{\partial\Omega}=0$,  $u\in{\stackrel{\circ}{W}}\vphantom{W}^2_1(\Omega)$ . Then the inequality
\begin{equation}\label{inv}(m-l)\frac{<u,w>_l}{<w,w>_l}\frac{<u,w>_m}{<w,w>_m}\leqslant m\frac{<u,u>_m}{<w,w>_m}-l\frac{<u,u>_l}{<w,w>_l}\end{equation}
is equivalent to (\ref{anpo}), whatever $\mu\in\mathbb R^+$ has been. It follows from (\ref{inv}) that the constant $\mathbf c$ in (\ref{Rt}) is invariant under dilation.

\end{document}